\title[Configuration spaces of commuting elements]{Configuration spaces of commuting elements}
\author{Jos\'e Cantarero}
\address{
\hfill\break Consejo Nacional de Ciencia y Tecnolog\'ia \\
\hfill\break Centro de Investigaci\'on en Matem\'aticas, A.C. Unidad M\'erida \\
\hfill\break Parque Cient\'ifico y Tecnol\'ogico de Yucat\'an \\ 
\hfill\break Carretera Sierra Papacal--Chuburn\'a Puerto Km 5.5 \\
\hfill\break Sierra Papacal, M\'erida, YUC 97302 \\
\hfill\break Mexico.}
\email{cantarero@cimat.mx}
\author{\'Angel R. Jim\'enez}
\address{
\hfill\break Universidad Aut\'onoma de Chiapas \\
\hfill\break Facultad de Ciencias en F\'isica y Matem\'aticas \\
\hfill\break Carretera Emiliano Zapata Km 8 \\
\hfill\break Rancho San Francisco, Ciudad Universitaria, Ter\'an \\
\hfill\break Tuxtla Guti\'errez, Chiapas 29050 \\
\hfill\break Mexico.}
\email{rolando.jimenez@unach.mx}
\newcommand{\ab}{\operatorname{ab}\nolimits}
\newcommand{\Conf}{\operatorname{Conf}\nolimits}
\newcommand{\Der}{\operatorname{Der}\nolimits}
\newcommand{\Hom}{\operatorname{Hom}\nolimits}
\newcommand{\mult}{\operatorname{mult}\nolimits}
\newcommand{\op}{\operatorname{op}\nolimits}
\newcommand{\pr}{\operatorname{pr}\nolimits}
\newcommand{\sgn}{\operatorname{sgn}\nolimits}
\newcommand{\Spin}{\operatorname{Spin}\nolimits}
\newcommand{\std}{\operatorname{std}\nolimits}
\newcommand{\Top}{\operatorname{Top}\nolimits}
\newcommand{\UConf}{\operatorname{UConf}\nolimits}
\def \P{{\mathcal P}}
\def \Q{{\mathbb Q}}
\def \R{{\mathbb R}}
\def \Z{{\mathbb Z}}
\theoremstyle{plain}
\newtheorem*{introtheorem}{Theorem}
\newtheorem{theorem}{Theorem}[section]
\newtheorem{proposition}[theorem]{Proposition}
\newtheorem{corollary}[theorem]{Corollary}
\newtheorem{lemma}[theorem]{Lemma}
\theoremstyle{definition}
\newtheorem{definition}[theorem]{Definition}
\newtheorem{remark}[theorem]{Remark}
\newtheorem{example}[theorem]{Example}
\keywords{Configuration spaces, Commuting elements, Homological stability, Representation stability}
\subjclass{22E15 55R80 57T99}
\begin{document}

\begin{abstract}
In this article we introduce the space of configurations of commuting
elements in a topological group and show that it satisfies rational 
homological stability for the sequences of unitary, special unitary and symplectic 
groups. We also prove that it satisfies cohomological rational representation stability 
with respect to the number of elements in the tuple for finite products of
such groups, in particular cohomological rational stability for the space
of unordered configurations of commuting elements. Finally we present 
some computations of cohomology in the unstable range.  
\end{abstract}

\maketitle

\section*{Introduction}
\label{SecIntroduction}

The study of the space $\Hom(\Z^k,G)$ of commuting $k$-tuples in a compact Lie group
$G$ was first motivated by computations in theoretical physics. In 
\cite{Wit}, Witten computed the supersymmetric index of vacuum states
in Yang-Mills theory on a spatial three-dimensional torus. This computation
did not agree with the expected value and this disagreement was found
in \cite{Wit2} to come from the assumption that the space of conjugacy
classes of commuting $3$-tuples in a connected compact Lie group $G$ is path-connected, 
which does not hold for instance when $G=\Spin_7$. Further computations
of this index were performed in \cite{KS}.

Goldman analyzed these spaces in \cite{Go} from a geometric point of view
and Adem-Cohen revisited them in \cite{AC} focusing on topological and
homotopical properties. This influential paper led to a series of articles
which examined the spaces of commuting $k$-tuples and generalizations from
the perspective of algebraic topology, and computed their homotopy
invariants such as the fundamental group \cite{GPS}, rational cohomology \cite{B},
equivariant $K$-theory \cite{AG} or twisted equivariant $K$-theory \cite{ACG}.
Another interesting feature of these spaces is that they can be used to
define new cohomology theories, such as commutative $K$-theory \cite{AG2}
and nilpotent $K$-theory \cite{AGLT}. The survey \cite{CS} gives a nice
overview of recent developments in this area.

On the other hand, configuration spaces and their natural variations 
(labelled, fibrewise, orbit configuration spaces) play an important
role in geometry and topology. See for instance \cite{Co} and \cite{FH}.
Hence it is natural to consider a hybrid of these two concepts, the 
space of configurations of commuting elements in a topological group. 
The study of this space was suggested in \cite{RS}, and was first 
developed in \cite{Ji}.

In this article we examine the homology and cohomology of the space
$\Conf_k^{\ab}(G)$ of configurations of $k$ commuting elements in a
compact Lie group $G$, including and extending results from \cite{Ji}.
When $G$ belongs to the family of $\P$ of finite products of classical groups of the form $U_n$, $SU_n$ 
and $Sp_n$, and $T$ is a maximal torus of $G$, the conjugation action of $G$ on $\Conf_k(T)$ determines
a cohomological principal bundle $G \times \Conf_k(T) \to \Conf_k^{\ab}(G)$
in the sense of \cite{B} for cohomology over fields whose characteristics do
not divide the order of the Weyl group of $G$. This is the main tool used in
our results.

Comparing the homology of $\Conf_k(T)$ and $T^k$, we find that the inclusion 
of $\Conf_k^{\ab}(G)$ into $\Hom(\Z^k,G)$ induces an isomorphism on $n$-dimensional
homology over these fields if $n$ is sufficiently large. Stability results in \cite{RS} 
and \cite{KT} imply the following result, which is Theorem \ref{StabilityForRank} in the text.

\begin{introtheorem}
Let $G_r$ be one the classical groups $U_r$, $SU_{r+1}$ or $Sp_r$. For each $k \geq 1$, the sequence $\{ \Conf_k^{\ab}(G_r) \}_{r \geq 1}$ satisfies strong rational homological stability. Stability holds in homological degree $n$ once
\[ r \geq \left\{ \begin{array}{ll}
                  n+2 & \text{ if } G_r = Sp_r, \\
                  \max \{ (n+k-1)/2 , n+2 \} & \text{ if } G_r = U_r, \\
                  \max \{ (n+k-3)/2 , n+2 \} & \text{ if } G_r = SU_{r+1}. \end{array} \right. \]
\end{introtheorem}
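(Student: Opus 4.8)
\smallskip
\noindent\emph{Proof strategy.} The plan is to deduce the statement from the rational homological stability of the sequences $\{\Hom(\Z^k,G_r)\}_{r}$ established in \cite{RS} and \cite{KT}, by comparing those spaces with $\Conf_k^{\ab}(G_r)$ along the natural open inclusion $\Conf_k^{\ab}(G_r)\hookrightarrow\Hom(\Z^k,G_r)$. All the relevant stabilization maps are induced by the standard embeddings $G_r\hookrightarrow G_{r+1}$ and are compatible with this inclusion, so once the inclusion is known to be a rational homology isomorphism in degree $n$ for $r$ large, the stable range for $\{\Conf_k^{\ab}(G_r)\}_r$ in degree $n$ becomes the worse of that range and the stable range for $\{\Hom(\Z^k,G_r)\}_r$.

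For the comparison, I would fix the standard maximal torus $T_r\subset G_r$ --- of rank $r$ in each of the three families --- with Weyl group $W_r$, and invoke the cohomological principal bundle $G_r\times\Conf_k(T_r)\to\Conf_k^{\ab}(G_r)$ together with its counterpart $G_r\times T_r^k\to\Hom(\Z^k,G_r)$ of \cite{B}. Since $|W_r|$ is invertible in $\Q$, these give natural isomorphisms
\[ H^*(\Conf_k^{\ab}(G_r);\Q)\cong\bigl(H^*(G_r/T_r;\Q)\otimes H^*(\Conf_k(T_r);\Q)\bigr)^{W_r}, \]
and the same with $\Conf_k(T_r)$ replaced by $T_r^k$, under which the inclusion becomes the $W_r$-equivariant restriction $H^*(T_r^k;\Q)\to H^*(\Conf_k(T_r);\Q)$. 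The complement $T_r^k\smallsetminus\Conf_k(T_r)$ is the union over pairs $i<j$ of the subtori $\{t_i=t_j\}$, each a closed submanifold of codimension $r$ with trivial normal bundle, so a Thom isomorphism together with Mayer--Vietoris gives $H^i(T_r^k,\Conf_k(T_r);\Q)=0$ for $i<r$, whence $H^i(T_r^k;\Q)\to H^i(\Conf_k(T_r);\Q)$ is an isomorphism for $i\le r-2$ and injective for $i=r-1$. Since $H^*(G_r/T_r;\Q)$ sits in nonnegative degrees, tensoring these maps with it and passing to $W_r$-invariants (an exact operation over $\Q$) shows that $H^n(\Hom(\Z^k,G_r);\Q)\to H^n(\Conf_k^{\ab}(G_r);\Q)$ is an isomorphism for $r\ge n+2$ and injective for $r=n+1$; dualizing over $\Q$, the inclusion $\Conf_k^{\ab}(G_r)\hookrightarrow\Hom(\Z^k,G_r)$ is a homology isomorphism in degree $n$ for $r\ge n+2$ and surjective in degree $n$ for $r=n+1$. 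This is the comparison announced in the introduction, now with the explicit threshold.

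Finally I would form the commuting square of inclusions among $\Conf_k^{\ab}(G_r)$, $\Conf_k^{\ab}(G_{r+1})$, $\Hom(\Z^k,G_r)$ and $\Hom(\Z^k,G_{r+1})$ --- the horizontal arrows being the rank-stabilizations --- and apply $H_n(-;\Q)$. By \cite{RS} and \cite{KT} the bottom arrow realizes the homological stability of $\{\Hom(\Z^k,G_r)\}_r$, whose stable range in degree $n$ is $(n+k-1)/2$ for $U_r$, $(n+k-3)/2$ for $SU_{r+1}$, and a quantity not exceeding $n+2$ for $Sp_r$, with the customary surjectivity one rank below. Combining with the comparison of the previous step --- an isomorphism for $r,r+1\ge n+2$ and a surjection already at $r=n+1$ --- a short diagram chase shows that the top arrow $H_n(\Conf_k^{\ab}(G_r);\Q)\to H_n(\Conf_k^{\ab}(G_{r+1});\Q)$ is an isomorphism once $r$ is at least the maximum of $n+2$ and the $\Hom$-stability bound (and epimorphic one step earlier), which gives precisely the ranges in the statement together with the strong form of stability. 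I expect the only delicate points to be the naturality of the identifications above in the torus variable --- so that $H^*(T_r^k;\Q)\to H^*(\Conf_k(T_r);\Q)$ genuinely computes the map induced by the inclusion --- and the codimension count that pins the threshold at $r\ge n+2$; the substantive inputs, namely the cohomological principal bundle and the homological stability of $\Hom(\Z^k,G_r)$, are imported directly from \cite{B}, \cite{RS} and \cite{KT}.
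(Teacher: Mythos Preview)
Your proof is correct and follows essentially the same route as the paper: both reduce to the stability of $\{\Hom(\Z^k,G_r)\}_r$ from \cite{KT} via the commuting square of inclusions, using the cohomological principal bundles of \cite{B} to identify the vertical maps with the $W_r$-invariants of $H^*(G_r/T_r)\otimes\bigl(H^*(T_r^k)\to H^*(\Conf_k(T_r))\bigr)$. The only difference is that the paper obtains the range $n\le r-2$ for the comparison isomorphism by quoting from \cite{GGG} that $\Conf_k(T_r)\hookrightarrow T_r^k$ is $(r-1)$-connected and then invoking Hurewicz, whereas you rederive the same vanishing of $H^i(T_r^k,\Conf_k(T_r);\Q)$ for $i<r$ by a Thom/Mayer--Vietoris argument on the codimension-$r$ diagonal arrangement.
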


Configuration spaces and spaces of commuting $k$-tuples in a compact Lie group 
do not satisfy homological stability with respect to $k$. However, they both 
satisfy a certain representation stability phenomenon in the sense of \cite{CF}.
If $G$ is a connected, compact Lie group, Ramras-Stafa \cite{RS} show that the 
unit components of the spaces $\Hom(\Z^k,G)$ satisfy uniform representation 
stability in homology over fields whose characteristics do not divide the
order of the Weyl group of $G$. And Church \cite{Ch} proved that if $M$ is a 
connected, compact orientable manifold, the spaces $\Conf_k(M)$ satisfy 
uniform representation stability in rational cohomology. We prove a similar
behaviour for the spaces $\Conf_k^{\ab}(G)$ with $G$ in $\P$.

\begin{introtheorem}
Let $G$ be a group in $\P$ and $j$ a nonnegative integer. Then the
sequences of $\Sigma_k$--representations $\{ H^j(\Conf_k^{\ab}(G);\Q) \}_{k \geq 1}$
satisfy uniform representation stability.
\end{introtheorem}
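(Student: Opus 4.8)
The plan is to pass through the maximal torus and the theory of FI-modules. Fix a maximal torus $T$ of $G$, so that $T \cong (S^1)^d$ with $d = \operatorname{rank} G$, and let $W$ be the Weyl group. The starting point is the cohomological principal bundle, which over $\Q$ (where $\operatorname{char}\Q = 0$ does not divide $|W|$) identifies
\[ H^*(\Conf_k^{\ab}(G);\Q)\ \cong\ \bigl(H^*(G/T;\Q)\otimes H^*(\Conf_k(T);\Q)\bigr)^W . \]
Here $W$ acts diagonally, by its standard action on $G/T$ and by its standard action on $T$ applied simultaneously to all $k$ coordinates of $\Conf_k(T)$, while $\Sigma_k$ acts trivially on $H^*(G/T;\Q)$ and by permuting points on $H^*(\Conf_k(T);\Q)$; the two actions commute. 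The comparison map $G/T \times \Conf_k(T) \to \Conf_k^{\ab}(G)$, $(gT,(t_1,\dots,t_k)) \mapsto (g t_1 g^{-1},\dots,g t_k g^{-1})$, is $W$-invariant, $\Sigma_k$-equivariant, and natural with respect to deleting points; hence $\Conf_\bullet^{\ab}(G)$ and $G/T \times \Conf_\bullet(T)$ are co-FI-spaces, the comparison map is a map of co-FI-spaces, and the displayed isomorphism is an isomorphism of FI-modules. I expect the verification that the cohomological principal bundle isomorphism is natural in $k$ --- that is, compatible with the deletion maps --- to be the step requiring the most care; the rest of the argument is formal once it is available.

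Next I would unwind the right-hand side. Since $\Q[W]$ is semisimple, $W$-invariants is an exact functor, and for each $a$ the space $H^a(G/T;\Q)$ is a fixed finite-dimensional $W$-representation with trivial $\Sigma_k$-action. Decomposing it into irreducible $W$-representations $V_\rho$ yields, for each $j$, an isomorphism of FI-modules
\[ H^j(\Conf_\bullet^{\ab}(G);\Q)\ \cong\ \bigoplus_{a+b=j}\ \bigoplus_{\rho}\ m_{a,\rho}\cdot \Hom_W\!\bigl(V_\rho,\,H^b(\Conf_\bullet(T);\Q)\bigr), \]
a finite direct sum in which the multiplicities $m_{a,\rho} = \dim \Hom_W(V_\rho, H^a(G/T;\Q))$ do not depend on $k$. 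A finite direct sum of finitely generated FI-modules is finitely generated, and a finitely generated FI-module over $\Q$ is uniformly representation stable in the sense of \cite{CF}; so it suffices to prove that each FI-module $\Hom_W(V_\rho, H^b(\Conf_\bullet(T);\Q))$ is finitely generated.

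For this I would invoke Church's theorem, with attention to the $W$-action. The assignment $k \mapsto \Conf_k(T)$ is a co-FI-space, and $W$ acts on it through the finite group of diffeomorphisms of $T$, commuting with the deletion maps; hence $H^b(\Conf_\bullet(T);\Q)$ is an FI-module equipped with a $W$-action. Since $T$ is a connected, closed, orientable manifold of dimension $d \geq 2$, Church's representation stability for configuration spaces, in its FI-module form, says that $H^b(\Conf_\bullet(T);\Q)$ is a finitely generated FI-module. As $\Q[W]$ is semisimple, the isotypic piece $\Hom_W(V_\rho, H^b(\Conf_\bullet(T);\Q))$ is a direct summand of it as an FI-module, hence finitely generated (FI-modules over $\Q$ being Noetherian). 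Substituting back shows that $H^j(\Conf_\bullet^{\ab}(G);\Q)$ is a finitely generated FI-module, whence the sequence $\{ H^j(\Conf_k^{\ab}(G);\Q) \}_{k \geq 1}$ of $\Sigma_k$-representations is uniformly representation stable, and (taking $\Sigma_k$-coinvariants) the same holds for $H^j(\UConf_k^{\ab}(G);\Q)$.

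Besides the naturality bookkeeping in the first paragraph, the one point of caution is the hypothesis $d = \operatorname{rank} G \geq 2$ needed for Church's theorem to apply to $\Conf_k(T)$: the rank-one torus is anomalous, since $\Conf_k(S^1)$ has $(k-1)!$ connected components and so already $H^0(\Conf_k(S^1);\Q)$ fails to be uniformly representation stable.
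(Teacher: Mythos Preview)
Your argument is correct and mirrors the paper's proof: both invoke Church's finite-generation theorem for the FI-module $H^*(\Conf_\bullet(T);\Q)$, combine it with the constant FI-module $H^*(G/T;\Q)$ via K\"unneth, pass to $W$-invariants, and conclude uniform representation stability from finite generation via \cite{CEF}. The only cosmetic difference is that the paper cites off-the-shelf results (Proposition~2.3.6 of \cite{CEF} for tensor products and Lemma~7.6 of \cite{RS} for invariants) in place of your explicit isotypic decomposition, and it does not flag the rank-one caveat you correctly raise.
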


As a consequence we obtain rational cohomological stability for the space \newline
\mbox{$\UConf_k^{\ab}(G) = \Conf_k^{\ab}(G)/\Sigma_k$} of unordered configurations of $k$ commuting elements,
although not necessarily induced by maps $\UConf_k^{\ab}(G) \to \UConf_{k-1}^{\ab}(G)$.  

The cohomological principal bundle $G \times \Conf_k(T) \to \Conf_k^{\ab}(G)$ is
a useful tool to compute other homotopy invariants, even outside of the stable range. 
For instance, we use it to determine the homotopy type of $\Conf_k^{\ab}(SU_2)$ for all $k \geq 2$.
We also compute other cohomology groups over fields whose characteristics do not divide the
order of the Weyl group in certain cases. Namely, the cohomology groups of $\Conf_3^{\ab}(U_2)$, 
of $\Conf_2^{\ab}(G)$ for any group $G$ of rank two in the family $\P$, and the first cohomology 
group of $\Conf_k^{\ab}(G)$ for any group $G$ of rank at least two
in the family $\P$. We also determine the cohomology rings of $\Conf_2^{\ab}(G)$ and $\UConf_2^{\ab}(G)$
for $G=U_2$, $S^1 \times SU_2$ over any field of characteristic different from two.

\section{Configurations of commuting elements}
\label{SecConfigurations}

In this section we introduce the space of configurations of commuting 
elements in a topological group. For compact Lie groups which are 
products of unitary, special unitary and symplectic groups, we give
a description of its homology groups and cohomology rings over certain 
fields in terms of their maximal tori and Weyl groups.

\begin{definition}
Let $G$ be a topological group and $k$ a positive integer. The space of 
configurations of $k$ commuting elements in $G$ is given by
\[ \Conf_k^{\ab}(G) = \{ (g_1,\ldots,g_k) \in \Conf_k(G) \mid g_ig_j=g_jg_i \text{ for all } i, j \} \]
with the subspace topology from $G^k$.
\end{definition}

Considering the usual identification between $\Hom(\Z^k,G)$ and the space
of commu-ting $k$-tuples, this space is just $\Hom(\Z^k,G) \cap \Conf_k(G)$. 
Note that $\Conf_1^{\ab}(G)=G$, so we will often ignore this case.

\begin{example}
We can identify the homotopy type of $\Conf_2^{\ab}(SU_2)$ by elementary means.
Consider the projection $\pr_1 \colon \Conf_2^{\ab}(SU_2) \to SU_2$ to the
first coordinate and the map 
\begin{gather*}
g \colon SU_2 \to \Conf_2^{\ab}(SU_2) \\
 A \mapsto (A,-A).
\end{gather*}
It is clear that $\pr_1g$ is the identity map, while $g \pr_1$ is homotopic to the
identity via the following homotopy
\begin{gather*}
H \colon \Conf_2^{\ab}(SU_2) \times I \to \Conf_2^{\ab}(SU_2) \\
H(A,B,t) = \left( A , \frac{t(-A)+(1-t)B}{\|t(-A)+(1-t)B\|} \right).
\end{gather*}
where we are regarding the matrices in $SU_2$ as norm one elements in $\R^4$.
Since $A \neq B$, the second coordinate of $H(A,B,t)$ makes sense and it is different from
$A$. If we regard now $A$ and $B$ as commuting quaternions, then their imaginary
parts must be parallel. Since the imaginary part of the second coordinate is a linear
combination of the imaginary parts of $A$ and $B$, it is also parallel to the imaginary
part of $A$. Therefore $H$ is well defined and $\Conf_2^{\ab}(SU_2)$ is homotopy equivalent to $SU_2$. 
\end{example}

In order to identify the homotopy type of $\Conf_k^{\ab}(SU_2)$ for $k \geq 3$, we need to establish 
some properties of the conjugation action of $G$ on $\Conf_k^{\ab}(G)$ for certain compact Lie groups. 
Let $T$ be a maximal torus of $G$ and consider the restriction $ \phi \colon G \times \Conf_k(T) \to \Conf_k^{\ab}(G)$ of the action map.
Following \cite{AG}, we let $\P$ be the family of compact Lie groups which are
products of groups of the form $SU_n$, $U_n$ and $Sp_n$.

\begin{proposition}
\label{CohomologyFormula}
Let $G$ be a group in the family $\P$ with maximal torus $T$ and Weyl group $W_G$.
If $F$ is a field whose characteristic does not divide the order of $W_G$, then $(\phi,N_G(T))$ 
is a cohomological principal bundle for $H^*(-;F)$ and the map $\phi$ induces an isomorphisms
of graded algebras
\[ H^*(\Conf_k^{\ab}(G);F) \cong H^*(G/T \times \Conf_k(T);F)^{W_G} \]
and an isomorphism $ H_n(\Conf_k^{\ab}(G);F) \cong H_n(G/T \times \Conf_k(T);F)^{W_G}$
for each $n \geq 0$.
\end{proposition}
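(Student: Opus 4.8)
The plan is to exhibit $(\phi, N_G(T))$ as a cohomological principal bundle in the sense of Baird \cite{B} and then apply the associated transfer/descent isomorphism, exactly as is done for the spaces $\Hom(\Z^k,G)$. The map $\phi \colon G \times \Conf_k(T) \to \Conf_k^{\ab}(G)$ factors through the quotient $G \times_{N_G(T)} \Conf_k(T)$, where $N_G(T)$ acts on $G$ by right translation and on $\Conf_k(T)$ by conjugation (this conjugation action is well defined because $N_G(T)$ normalizes $T$, hence permutes $\Conf_k(T)$, and it restricts to the $W_G$-action through $T$, which acts trivially). So I first check that the induced map $\bar\phi \colon G \times_{N_G(T)} \Conf_k(T) \to \Conf_k^{\ab}(G)$ is a continuous bijection; surjectivity is the maximal-torus theorem (every commuting tuple lies in a common maximal torus, and all maximal tori are conjugate), and injectivity is a standard computation with stabilizers. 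Since both spaces are compact Hausdorff (they are closed subspaces of $G^k$, resp.\ quotients thereof), $\bar\phi$ is a homeomorphism. The fibre of $\phi$ over a point is then a copy of $N_G(T)$, which is what "principal bundle with structure group $N_G(T)$" should mean here.

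Next I must verify the defining cohomological condition: that the projection $\pi \colon G \times \Conf_k(T) \to G \times_{N_G(T)} \Conf_k(T)$ induces, after passing to $F$-cohomology, an isomorphism onto the $N_G(T)$-invariants, i.e. $H^*(\Conf_k^{\ab}(G);F) \cong H^*(G \times \Conf_k(T);F)^{N_G(T)}$. The key input is that the characteristic of $F$ does not divide $|W_G|$, together with the fact that $T$ (the identity component of $N_G(T)$) acts on $\Conf_k(T)$ \emph{trivially} by conjugation, so the relevant finite group acting is really $W_G = N_G(T)/T$. Because $T$ is connected and acts trivially on the base, one reduces to the finite-group quotient $(G/T) \times \Conf_k(T) \to \big((G/T) \times \Conf_k(T)\big)/W_G \cong \Conf_k^{\ab}(G)$; here I would invoke the standard transfer argument for free actions of finite groups of order invertible in $F$ — or cite the corresponding statement in \cite{B} that this is precisely a cohomological principal bundle — to get $H^*(\Conf_k^{\ab}(G);F) \cong H^*((G/T)\times \Conf_k(T);F)^{W_G}$ as graded rings, the ring structure being inherited because $\pi$ and the diagonal are compatible. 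The homology statement follows by the same transfer, or dually by the universal coefficient theorem over the field $F$ together with the fact that taking $W_G$-invariants is exact when $|W_G|$ is invertible.

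The one nontrivial point specific to this setting — as opposed to the classical $\Hom(\Z^k,G)$ case in \cite{AG} — is that $\Conf_k(T) \subset T^k$ is an \emph{open} submanifold rather than all of $T^k$, so I should make sure the action of $N_G(T)$ on $G \times \Conf_k(T)$ is still free with Hausdorff quotient and that $\phi$ is still a closed map onto its image. Freeness is inherited from freeness of the $N_G(T)$-action on $G$ by right translation; properness and the closed-image property follow from compactness of $N_G(T)$ and $G$. Restricting the action to the open, $N_G(T)$-invariant subset $\Conf_k(T)$ causes no difficulty because all the arguments (the homeomorphism $\bar\phi$, the transfer isomorphism for the $W_G$-quotient) are local on the base or purely about the finite group $W_G$, and none of them used that the $T$-factor was the full $T^k$. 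Thus the main obstacle is bookkeeping rather than a genuine difficulty: one just transplants Baird's cohomological-principal-bundle machinery from $T^k$ to the open subspace $\Conf_k(T)$, checking at each step that openness and $N_G(T)$-invariance of $\Conf_k(T)$ are all that is needed.
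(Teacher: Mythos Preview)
Your proposal has a genuine gap: the claim that $\bar\phi \colon G \times_{N_G(T)} \Conf_k(T) \to \Conf_k^{\ab}(G)$ is a bijection is false in general. For a concrete counterexample take $G = U_2$, $k = 2$, and the tuple $(I,-I) \in \Conf_2(T)$. Since $I$ and $-I$ are central, $\phi(g,I,-I) = (I,-I)$ for \emph{every} $g \in G$, so the fibre of $\bar\phi$ over $(I,-I)$ is all of $G/N_G(T)$, not a point. More generally, injectivity fails over any tuple whose centralizer in $G$ strictly contains $N_G(T)$, and such tuples exist whenever the centre of $G$ has at least $k$ elements. (The paper itself makes this point implicitly: just after the proposition it notes that for $SU_2$ the induced map $\Phi$ happens to be injective \emph{only} for $k \geq 3$, precisely because one can then guarantee a non-central coordinate.) Your compactness claim is also wrong: $\Conf_k(T)$ is open, not closed, in $T^k$, so neither $G \times_{N_G(T)} \Conf_k(T)$ nor $\Conf_k^{\ab}(G)$ is compact for $k \geq 2$; for instance $(I,\mathrm{diag}(e^{i/n},1)) \in \Conf_2^{\ab}(U_2)$ converges to $(I,I)$, which lies outside the space.

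The paper avoids this by \emph{not} trying to prove that $\bar\phi$ is a homeomorphism. Instead it verifies the hypothesis of Baird's Theorem~3.3 in \cite{B}: one needs $\Conf_k^{\ab}(G)^T = \Conf_k(T)$, where the left side denotes the $T$-fixed points for the conjugation action. This follows from the fact (Subsection~6.1 of \cite{AG}) that for $G \in \P$ the fundamental group is torsion-free and hence $\Hom(\Z^k,G)^T = T^k$; intersecting with $\Conf_k(G)$ gives the desired equality. Baird's theorem then yields the cohomological isomorphism directly, without any injectivity statement for $\bar\phi$. The homology version is deduced from the cohomology version via the argument in the proof of Proposition~3.2 of \cite{RS}. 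So the correct route is to feed the $T$-fixed-point computation into Baird's black box rather than to attempt a point-set identification that does not hold.
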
 

\begin{proof}
The space $\Conf_k^{\ab}(G)$ is a closed subspace of $G^k$, hence it is
paracompact Hausdorff. The conjugation action of $G$ on $\Conf_k^{\ab}(G)$ 
is the restriction of the conjugation action on $\Hom(\Z^k,G)$. Since $G$
belongs to the family $\P$, its fundamental group is torsion-free, hence
$\Hom(\Z^k,G)^T =T^k$ by Subsection 6.1 and the paragraph before Theorem 6.7
in \cite{AG}. Then we have 
\[ \Conf_k^{\ab}(G)^T = \Hom(\Z^k,G)^T \cap \Conf_k^{\ab}(G) = T^k \cap \Conf_k^{\ab}(G) = \Conf_k(T) . \]
By Theorem 3.3 in \cite{B}, the pair $(\phi,N_G(T))$ is a cohomological 
principal bundle for $H^*(-;F)$ for any field $F$ whose characteristic
does not divide the order of $W_G$ and the cohomological formula follows. 

Note that the second part of the proof of Proposition 3.2 in \cite{RS} shows
that if $X$ is a space on which a finite group $H$ acts, and the quotient map
$q \colon X \to X/H$ satisfies that $q^* \colon H^n(X/H;F) \to H^n(X;F)^H$ 
is an isomorphism, then the restriction $H_n(X;F)^H \to H_n(X/H;F)$ of $q_*$ 
is an isomorphism. Therefore we obtain the statement for homology.
\end{proof}

\begin{remark}
The isomorphisms in this proposition are natural with respect to monomorphisms $f \colon G \to G'$
which satisfy $ f(T) \leq T'$ and $f(N_G(T)) \leq N_{G'}(T')$.
\end{remark}

Any group $G$ in the family $\P$ is path-connected. If the dimension of its maximal torus $T$ is 
at least two, then $\Conf_k(T)$ is path-connected by Theorem 3.2 in \cite{GGG} (see also
Theorem 1 in \cite{Bir}). The next result follows.

\begin{corollary}
If $G$ is a group in the family $\P$ of rank at least two, then $\Conf_k^{\ab}(G)$ is path-connected.
\end{corollary}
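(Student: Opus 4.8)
The plan is to combine Proposition \ref{CohomologyFormula} with the connectivity facts just recalled. Since $G$ lies in $\P$, it is path-connected, so $G/T$ is path-connected; and since $G$ has rank at least two, $\dim T \geq 2$, so $\Conf_k(T)$ is path-connected by the cited result (Theorem 3.2 in \cite{GGG}). Hence the product $G/T \times \Conf_k(T)$ is path-connected, and in particular $H_0(G/T \times \Conf_k(T);\Q) \cong \Q$.

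Next I would apply Proposition \ref{CohomologyFormula} with $F = \Q$ (noting that the characteristic of $\Q$ does not divide $|W_G|$), which gives an isomorphism
\[ H_0(\Conf_k^{\ab}(G);\Q) \cong H_0(G/T \times \Conf_k(T);\Q)^{W_G}. \]
Since $G/T \times \Conf_k(T)$ is path-connected, its zeroth homology is $\Q$ with trivial $W_G$-action, so the invariants are again $\Q$. Therefore $H_0(\Conf_k^{\ab}(G);\Q) \cong \Q$, which forces $\Conf_k^{\ab}(G)$ to be path-connected (it is a nonempty space, containing for instance the constant tuple at a nontrivial element of $T$, and a space whose rational zeroth homology is one-dimensional has a single path component).

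There is no real obstacle here: the corollary is a direct bookkeeping consequence of the preceding proposition and the connectivity input from \cite{GGG}. The only point that requires a word of care is verifying that $\Conf_k^{\ab}(G)$ is nonempty in the first place — but for any group $G$ of rank at least two the torus $T$ has infinitely many points, so one can choose $k$ distinct commuting elements in $T$, giving a point of $\Conf_k^{\ab}(G)$. Alternatively one can simply invoke the fact that the homeomorphism type identification $\Conf_k^{\ab}(G)^T = \Conf_k(T)$ from the proof of Proposition \ref{CohomologyFormula} already exhibits $\Conf_k(T) \neq \emptyset$ as a subspace, so the ambient space is nonempty.
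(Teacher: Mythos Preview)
Your argument is correct and uses the same two inputs as the paper: path-connectedness of $G$ (hence of $G/T$) and path-connectedness of $\Conf_k(T)$ when $\dim T \geq 2$. The paper's one-line deduction, however, is more direct than yours: since $(\phi,N_G(T))$ is a cohomological principal bundle by Proposition~\ref{CohomologyFormula}, the map $\phi\colon G\times\Conf_k(T)\to\Conf_k^{\ab}(G)$ is in particular a (closed) surjection, so $\Conf_k^{\ab}(G)$ is the continuous image of a path-connected space and therefore path-connected. Your detour through the $H_0$ isomorphism reaches the same conclusion, just with an unnecessary homological step; the surjectivity of $\phi$ already does the work and also absorbs the nonemptiness check.

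One small slip to fix: the ``constant tuple at a nontrivial element of $T$'' is \emph{not} a point of $\Conf_k^{\ab}(G)$ for $k\geq 2$, since configurations require pairwise distinct entries. Your subsequent argument---choosing $k$ distinct elements of the infinite torus $T$---is the correct way to witness nonemptiness.
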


We now determine the homotopy type of $\Conf_k^{\ab}(SU_2)$ for $k \geq 3$. Let $T$ be the standard 
maximal torus of $SU_2$. Since the map $\phi \colon SU_2/T \times \Conf_k(T) \to \Conf_k^{\ab}(SU_2)$ is a 
cohomological principal bundle, it is a closed surjection and therefore so is the induced map 
$ \Phi \colon SU_2/T \times_{\Z/2} \Conf_k(T) \to \Conf_k^{\ab}(SU_2)$.

In this case the map $\Phi$ is injective as well. If $\Phi([gT,z_1,\ldots,z_k]) = \Phi([hT,w_1,\ldots,w_k])$,
then $ h^{-1}gz_ig^{-1}h=w_i$ for all $i$. Since $k \geq 3$, there exists
a $z_j$ which is not in the center of $SU_2$, from where $ h^{-1}g \in N_{SU_2}(T)$
and so $[gT,z_1,\ldots,z_k]) = [hT,w_1,\ldots,w_k]$. Therefore $\Phi$ is a homeomorphism.

The space $ SU_2/T$ is $\Z/2$--homeomorphic to $S^2$ with the antipodal action.
On the other hand, $S^1 \times \Conf_{k-1}(0,1) \cong \Conf_k(T) $ via the
homeomorphism that sends $(z,t_1,\ldots,t_{k-1})$ to $(z,ze^{2\pi it_1},\ldots,ze^{2\pi it_{k-1}})$
by Example 2.6 in \cite{Co}. The space $\Conf_{k-1}(0,1)$ is homotopically discrete, its path components
given by the subspaces
\[ A_{\sigma} = \{ (t_1,\ldots,t_{k-1}) \in (0,1)^{k-1} \mid t_{\sigma^{-1}(1)} < \ldots < t_{\sigma^{-1}(k-1)} \} \]
for each $\sigma \in \Sigma_{k-1}$. These spaces are convex, hence contractible. 
The action of $\Z/2$ on $\Conf_k(T)$ corresponds
to the action
\[ (z,t_1,\ldots,t_{k-1}) \mapsto (z^{-1},1-t_1,\ldots,1-t_{k-1}) \]
on $S^1 \times \Conf_{k-1}(0,1)$. Note that if $(t_1,\ldots,t_{k-1}) \in A_{\sigma}$,
then $(1-t_1,\ldots,1-t_{k-1}) \in A_{\overline{\sigma}}$, where $\overline{\sigma}(i)=k-\sigma(i)$.
Therefore $\Conf_k(S^1)$ is $\Z/2$--homotopy equivalent to $ S^1 \times \Sigma_{k-1}$, where
we consider $\Sigma_{k-1}$ as a discrete space, with the action of $\Z/2$ given by
$(z,\sigma) \mapsto (z^{-1},\overline{\sigma})$. Therefore if $k \geq 3$, we have
\[ \Conf_k^{\ab}(SU_2) \simeq S^2 \times_{\Z/2} (S^1 \times \Sigma_{k-1}) \cong \coprod_{i=1}^{(k-1)!/2} S^2 \times S^1, \]
where the last homeomorphism holds since for each $\sigma \in \Sigma_{k-1}$, the action of $\Z/2$ restricts
to a homeomorphism $ S^2 \times S^1 \times \{ \sigma \} \to S^2 \times S^1 \times \{ \overline{\sigma} \}$.
We record these homotopy types for future reference in the following proposition.

\begin{proposition}
Let $k \geq 2$. Then 
\[ \Conf_k^{\ab}(SU_2) \simeq \left\{ \begin{array}{ll}
                                      SU_2 & \text{ if } k = 2, \\
                                           &                    \\ 
                                      \mathop{\coprod} \limits_{i=1}^{(k-1)!/2} S^2 \times S^1 & \text{ if } k \geq 3. \end{array} \right. \]
\end{proposition}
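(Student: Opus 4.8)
The plan is to handle the two cases separately; in both cases the argument is essentially recorded in the discussion preceding the statement. For $k = 2$ there is nothing more to do: the first Example above exhibits the projection $\pr_1 \colon \Conf_2^{\ab}(SU_2) \to SU_2$ and the section $A \mapsto (A,-A)$ as mutually homotopy-inverse, so $\Conf_2^{\ab}(SU_2) \simeq SU_2$.

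For $k \geq 3$ I would start from the cohomological principal bundle $\phi \colon SU_2/T \times \Conf_k(T) \to \Conf_k^{\ab}(SU_2)$ provided by Proposition \ref{CohomologyFormula}. As a cohomological principal bundle, $\phi$ is in particular a closed surjection, hence so is the induced map $\Phi \colon SU_2/T \times_{\Z/2} \Conf_k(T) \to \Conf_k^{\ab}(SU_2)$. The key algebraic input is injectivity of $\Phi$: if $\Phi([gT,z_1,\dots,z_k]) = \Phi([hT,w_1,\dots,w_k])$ then $h^{-1}gz_ig^{-1}h = w_i$ for all $i$, and since $k \geq 3$ some $z_j$ is not central in $SU_2$, which forces $h^{-1}g \in N_{SU_2}(T)$ and hence equality of the two classes. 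A continuous closed bijection is a homeomorphism, so $\Phi$ is one and $\Conf_k^{\ab}(SU_2) \cong SU_2/T \times_{\Z/2} \Conf_k(T)$.

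It then remains to identify the right-hand side up to homotopy, working $\Z/2$-equivariantly throughout. The space $SU_2/T$ is $\Z/2$-homeomorphic to $S^2$ with the antipodal action. Via the homeomorphism $\Conf_k(T) \cong S^1 \times \Conf_{k-1}(0,1)$ of Example 2.6 in \cite{Co} sending $(z,t_1,\dots,t_{k-1})$ to $(z,ze^{2\pi it_1},\dots,ze^{2\pi it_{k-1}})$, the conjugation $\Z/2$-action becomes $(z,t_1,\dots,t_{k-1}) \mapsto (z^{-1},1-t_1,\dots,1-t_{k-1})$. Since $\Conf_{k-1}(0,1)$ is homotopically discrete with contractible components $A_\sigma$ indexed by $\Sigma_{k-1}$, and $(1-t_1,\dots,1-t_{k-1}) \in A_{\overline\sigma}$ with $\overline\sigma(i) = k - \sigma(i)$, this gives a $\Z/2$-homotopy equivalence $\Conf_k(T) \simeq_{\Z/2} S^1 \times \Sigma_{k-1}$, where $\Sigma_{k-1}$ is discrete and $\Z/2$ acts by $(z,\sigma) \mapsto (z^{-1},\overline\sigma)$. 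Here one checks that $\overline\sigma \neq \sigma$ for every $\sigma$, since $\overline\sigma = \sigma$ would force $2\sigma(i) = k$ for all $i$, impossible for a bijection onto $\{1,\dots,k-1\}$ once $k - 1 \geq 2$; thus the action on $\Sigma_{k-1}$ is free, partitioning it into $(k-1)!/2$ orbit pairs $\{\sigma,\overline\sigma\}$.

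Finally, because the antipodal action on $S^2$ is free, the functor $S^2 \times_{\Z/2}(-)$ sends $\Z/2$-homotopy equivalences to homotopy equivalences, so $\Conf_k^{\ab}(SU_2) \cong S^2 \times_{\Z/2}\Conf_k(T) \simeq S^2 \times_{\Z/2}(S^1 \times \Sigma_{k-1})$. On each orbit pair the $\Z/2$-action restricts to a homeomorphism $S^2 \times S^1 \times \{\sigma\} \to S^2 \times S^1 \times \{\overline\sigma\}$, so the corresponding summand of the quotient is $S^2 \times S^1$; assembling the $(k-1)!/2$ pairs yields the claimed disjoint union. The main obstacle is the injectivity of $\Phi$ together with the care needed to promote the various homeomorphisms and contractions to $\Z/2$-equivariant statements; once the action on the component set $\Sigma_{k-1}$ is recognized as free, the passage to the quotient is purely formal.
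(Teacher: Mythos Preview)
Your proposal is correct and follows essentially the same argument as the paper: the $k=2$ case via the explicit homotopy from the Example, and for $k\geq 3$ the identification $\Conf_k^{\ab}(SU_2)\cong SU_2/T\times_{\Z/2}\Conf_k(T)$ via injectivity of $\Phi$, followed by the $\Z/2$-equivariant analysis of $\Conf_k(T)\simeq S^1\times\Sigma_{k-1}$. You even supply two details the paper leaves implicit---the verification that $\overline\sigma\neq\sigma$ so the action on $\Sigma_{k-1}$ is free, and the observation that freeness of the antipodal action on $S^2$ lets you pass the $\Z/2$-homotopy equivalence through the balanced product.
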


\section{Homological stability}
\label{SecStability}

In this section, we determine stability phenomena for the spaces of configurations of commuting elements
by varying the group over some of the classical families and by varying the number of commuting elements.

\subsection{Stability for classical families}

Recall that $\P$ denotes the family of finite products of groups of the form $U_n$, $SU_n$ and $Sp_n$.

\begin{proposition}
\label{DeHomAConf}
Let $G$ be a group in the family $\P$ of rank $r \geq 2$ and let $F$ be a field
whose characteristic does not divide the order of the Weyl group. If $n \leq r-2$, the
inclusion of $\Conf_k^{\ab}(G)$ in $\Hom(\Z^k,G)$ induces an isomorphism 
\mbox{$ H_n(\Conf_k^{\ab}(G);F) \to H_n(\Hom(\Z^k,G);F)$}.
\end{proposition}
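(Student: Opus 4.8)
The plan is to compare the two spaces through their maximal-torus models. Proposition~\ref{CohomologyFormula} gives a $W_G$--equivariant isomorphism $H_n(\Conf_k^{\ab}(G);F) \cong H_n(G/T \times \Conf_k(T);F)^{W_G}$, and the very same argument applies to $\Hom(\Z^k,G)$: the proof of Proposition~\ref{CohomologyFormula} already records that $\Hom(\Z^k,G)^T = T^k$, so Theorem~3.3 in \cite{B} applies to the action map $G \times T^k \to \Hom(\Z^k,G)$ and yields $H_n(\Hom(\Z^k,G);F) \cong H_n(G/T \times T^k;F)^{W_G}$. Since the inclusion $\Conf_k^{\ab}(G) \hookrightarrow \Hom(\Z^k,G)$, the two action maps, and the inclusion $\iota_k \colon \Conf_k(T) \hookrightarrow T^k$ sit in a commutative square, the naturality of the construction in \cite{B} (in the spirit of the remark following Proposition~\ref{CohomologyFormula}) identifies the map induced on homology by $\Conf_k^{\ab}(G) \hookrightarrow \Hom(\Z^k,G)$ with the map $H_n(G/T \times \Conf_k(T);F)^{W_G} \to H_n(G/T \times T^k;F)^{W_G}$ induced by $\mathrm{id}_{G/T} \times \iota_k$. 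It therefore suffices to show this last map is an isomorphism when $n \leq r-2$.

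Because the characteristic of $F$ does not divide $|W_G|$, the functor $(-)^{W_G}$ is exact, so it is enough to see that $\mathrm{id}_{G/T} \times \iota_k$ induces an isomorphism on $H_n(-;F)$ without invariants for $n \leq r-2$; by the K\"unneth theorem (whose isomorphism is $W_G$--equivariant for the diagonal action) this in turn reduces to showing that $\iota_k$ itself induces an isomorphism on $H_n(-;F)$ for $n \leq r-2$, where $r = \dim T$ is the rank of $G$. This is a codimension estimate for the fat diagonal. The complement $T^k \setminus \Conf_k(T)$ is $\bigcup_{i<j} \Delta_{ij}$ with $\Delta_{ij} = \{ x_i = x_j \}$ a subtorus of $T^k$ of codimension $r$, and with trivial normal bundle since it is a closed subgroup of the abelian group $T^k$. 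Stratifying the fat diagonal by the partition of $\{1,\dots,k\}$ recording which coordinates coincide, one obtains strata that are closed submanifolds of successive open pieces of $T^k$ of codimension a positive multiple of $r$, again with trivial normal bundle. Feeding the Thom isomorphism for these normal bundles into the long exact sequences of the associated filtration of $(T^k,\Conf_k(T))$ shows $H_m(T^k,\Conf_k(T);F) = 0$ for $m \leq r-1$, whence $\iota_k$ is a homology isomorphism in degrees $\leq r-2$ (and an epimorphism in degree $r-1$).

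I expect the main obstacle to be this vanishing $H_m(T^k,\Conf_k(T);F) = 0$ for $m \leq r-1$. The subtlety is that the fat diagonal is not itself a manifold, so the argument must be organized through the stratification and Thom isomorphisms above rather than through a relative Hurewicz theorem, which is awkward here since $\Conf_k(T)$ is not simply connected. A secondary point deserving care is the naturality claim of the first paragraph: one must check that, under the isomorphisms of Proposition~\ref{CohomologyFormula} and its analogue for $\Hom(\Z^k,G)$, the map induced by the inclusion of configuration spaces really does correspond to the restriction $H_*(\Conf_k(T);F) \to H_*(T^k;F)$ crossed with $G/T$ and passed to $W_G$--invariants; this follows formally from the way the cohomological principal bundle isomorphism is produced in \cite{B}, applied to the evident map of $G$--spaces, but it is worth spelling out.
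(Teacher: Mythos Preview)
Your overall architecture is the same as the paper's: reduce via the cohomological principal bundles to the $W_G$--equivariant map $\mathrm{id}_{G/T}\times\iota_k$, and then show $\iota_k\colon \Conf_k(T)\to T^k$ is a homology isomorphism in degrees $\le r-2$. The only genuine difference is in this last step. The paper does \emph{not} argue by stratifying the fat diagonal and invoking Thom isomorphisms; instead it quotes Theorem~3.2 of \cite{GGG}, which says directly that $\iota_k$ is $(r-1)$--connected, and then concludes the homology isomorphism in degrees $\le r-2$ via the relative Hurewicz theorem. So the very route you flagged as ``awkward'' is precisely what the paper uses. Your concern about simple connectivity is reasonable as a matter of citation---the version stated as Theorem~4.37 in \cite{H} assumes $A$ simply connected---but the underlying fact (an $(r-1)$--connected map of spaces induces an isomorphism on $H_n$ for $n\le r-2$) is standard and does not require $\Conf_k(T)$ to be simply connected: e.g.\ replace the map by a relative CW pair with no cells of dimension $\le r-1$ and read off the vanishing of relative cellular homology.

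Your alternative codimension argument is correct in outline and has the virtue of being self-contained (no appeal to \cite{GGG}), but it is longer and the stratification bookkeeping you sketch (``strata that are closed submanifolds of successive open pieces \dots\ of codimension a positive multiple of $r$'') would need to be written out carefully to be a proof. The paper's route is shorter because the heavy lifting is outsourced to \cite{GGG}. Your remarks on naturality and the identification of the induced map with $(\mathrm{id}\times\iota_k)_*$ on $W_G$--invariants are fine and match what the paper uses implicitly via the commutative square of action maps.
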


\begin{proof}
Let $T$ be a maximal torus of $G$. By Theorem 3.2 in \cite{GGG}, the inclusion $i_k \colon \Conf_k(T) \to T^k$ 
is $(r-1)$--connected. Since the dimension of $T$ is at least two, the space $\Conf_k(T)$
is path-connected and we can use the relative Hurewicz theorem (see for instance Theorem 4.37 in \cite{H})
to conclude that $i_k$ induces an isomorphism in homology up to dimension $r-2$, hence in
homology with coefficients in $F$ up to dimension $r-2$. Note that the diagram 
\[ 
\diagram
G/T \times \Conf_k(T) \rto^{\quad \phi} \dto & \Conf_k^{\ab}(G) \dto \\
G/T \times T^k \rto & \Hom(\Z^k,G)
\enddiagram
\]
induced by the conjugation action of $G$ is commutative. By the previous paragraph, the vertical map on 
the left induces an isomorphism on $H_n(-;F)$ for $n \leq r-2$. Since this map is equivariant with respect
to the action of the Weyl group $W_G$, it induces an isomorphism on $H_n(-;F)^{W_G}$ for $n \leq r-2$. The
horizontal maps induce isomorphisms 
\begin{align*}
 & H_n(G/T \times \Conf_k(T);F)^{W_G} \to H_n(\Conf_k^{\ab}(G);F), \\
 & H_n(G/T \times T^k;F)^{W_G} \to H_n(\Hom(\Z^k,G);F),
\end{align*}
for all $n$ by Proposition \ref{CohomologyFormula} and Theorem 3.3 in \cite{B}, together with the second part 
of the proof of Proposition 3.2 in \cite{RS}. Therefore the vertical map on the right induces an isomorphism 
on $H_n(-;F)$ for $n \leq r-2$.
\end{proof}

Let $G_r$ be one of the classical groups $U_r$, $SU_{r+1}$ or $Sp_r$. The standard inclusions $ G_r \to G_{r+1}$ 
induce inclusions $\varphi_r \colon \Conf_k^{\ab}(G_r) \to \Conf_k^{\ab}(G_{r+1})$. Recall that a sequence of 
spaces $\{ X_1 \to X_2 \to \ldots \}$ satisfies strong rational homological stability if for each $i \geq 0$, there
exists a positive integer $N(i)$ such that $H_i(X_n;\Q) \to H_i(X_{n+1};\Q)$ is an isomorphism for all $n \geq N(i)$.

\begin{theorem}
\label{StabilityForRank}
For each $k \leq 1$, the sequence $\{ \Conf_k^{\ab}(G_r),\varphi_r \}_{r \geq 1}$ satisfies strong rational
homological stability. Stability holds in homological degree $n$ once
\[ r \geq \left\{ \begin{array}{ll}
                  n+2 & \text{ if } G_r = Sp_r, \\
                  \max \{ (n+k-1)/2 , n+2 \} & \text{ if } G_r = U_r, \\
                  \max \{ (n+k-3)/2 , n+2 \} & \text{ if } G_r = SU_{r+1}. \end{array} \right. \]
\end{theorem}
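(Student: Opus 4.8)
The plan is to reduce the statement to the rational homological stability of the spaces of commuting $k$-tuples $\Hom(\Z^k,G_r)$ by means of Proposition~\ref{DeHomAConf}, and then to invoke that stability, with explicit ranges, from \cite{RS} and \cite{KT}. For $k=1$ we have $\Conf_1^{\ab}(G_r)=G_r$, so the statement is just classical homological stability for the classical groups, in much wider ranges than claimed; hence only $k\geq 2$ is of real interest, although the argument below is uniform in $k\geq 1$. Fix a homological degree $n\geq 0$.

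First I would set up a naturality square. For $G_r$ equal to $U_r$, $SU_{r+1}$ or $Sp_r$, the standard inclusion $G_r\hookrightarrow G_{r+1}$ restricts simultaneously to $\varphi_r$ and to the inclusion $\psi_r\colon\Hom(\Z^k,G_r)\hookrightarrow\Hom(\Z^k,G_{r+1})$, and it is compatible with the inclusions $\iota_r\colon\Conf_k^{\ab}(G_r)\hookrightarrow\Hom(\Z^k,G_r)$, so that $\iota_{r+1}\circ\varphi_r=\psi_r\circ\iota_r$. Applying $H_n(-;\Q)$ yields a commutative square with horizontal maps $(\varphi_r)_*$ and $(\psi_r)_*$ and vertical maps $(\iota_r)_*$ and $(\iota_{r+1})_*$. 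Since $\Q$ has characteristic zero, Proposition~\ref{DeHomAConf} applies to each of these groups once its rank is at least two, and shows that $(\iota_r)_*$ is an isomorphism whenever $n\leq r-2$, that is, whenever $r\geq n+2$; in that case $r+1\geq n+2$ gives $n\leq(r+1)-2$, so $(\iota_{r+1})_*$ is an isomorphism as well. Therefore, for every $r\geq n+2$, the map $(\varphi_r)_*$ is an isomorphism if and only if $(\psi_r)_*$ is.

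It remains to quote the rational homological stability of the sequence $\{\Hom(\Z^k,G_r)\}$. By \cite{RS} and \cite{KT}, which identify $H^*(\Hom(\Z^k,G_r);\Q)$ with $\big(H^*(G_r/T\times T^k;\Q)\big)^{W_{G_r}}$ and analyze how it stabilizes as $r\to\infty$, the map $(\psi_r)_*$ is an isomorphism on $H_n(-;\Q)$ once $r\geq(n+k-1)/2$ for $G_r=U_r$, once $r\geq(n+k-3)/2$ for $G_r=SU_{r+1}$, and once $r$ is at least a bound that in any case does not exceed $n+2$ for $G_r=Sp_r$, which is why no dependence on $k$ survives in the symplectic case. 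Taking in each case the maximum of this bound with $n+2$, which is what the use of Proposition~\ref{DeHomAConf} at levels $r$ and $r+1$ requires, and combining with the previous paragraph produces exactly the ranges in the statement.

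The main obstacle is therefore the homological stability of $\{\Hom(\Z^k,G_r)\}$ with the explicit ranges above, which is the reason it is invoked from \cite{RS} and \cite{KT} rather than reproved here: the difficulty is to control the stabilization of the $W_{G_r}$-invariant part of $H^*(G_r/T)\otimes H^*(T^k)$, which is subtle because the flag-manifold factor $H^*(G_r/T)$, the $k$-fold torus factor $H^*(T^k)$ and the Weyl group $W_{G_r}$ (the symmetric group for $U_r$ and $SU_{r+1}$, the hyperoctahedral group for $Sp_r$) all grow with $r$. Everything else in the proof is a formal diagram chase.
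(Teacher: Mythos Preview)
Your proposal is correct and follows essentially the same route as the paper's proof: set up the commutative square comparing $\Conf_k^{\ab}(G_r)$ with $\Hom(\Z^k,G_r)$, use Proposition~\ref{DeHomAConf} to make the vertical maps isomorphisms once $r\geq n+2$, and invoke the stability ranges for $\Hom(\Z^k,G_r)$ from \cite{KT} (building on \cite{RS}), noting that for $Sp_r$ the bound from \cite{KT} is already dominated by $n+2$. The only difference is that you spell out the naturality square and the $k=1$ case more explicitly, and add commentary on why the $\Hom(\Z^k,G_r)$ stability is the genuine input; the mathematical content is the same.
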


\begin{proof}
Strong rational homological stability for the sequence of spaces $\Hom(\Z^k,G_r)$ was first established in \cite{RS}, but we will
use the improved bounds from \cite{KT}. By Theorem 7.4 in \cite{KT}, the analogous map $\Hom(\Z^k,G_r) \to \Hom(\Z^k,G_{r+1})$ 
induces an isomorphism on $n$-dimensional rational homology if $n \leq d_{k,r}$ for a certain integer that depends on whether 
$G_r$ is $Sp_r$, $U_r$ or $SU_{r+1}$. And if $r \geq n+2$, in particular we have $r\geq 2$, so we can use Proposition \ref{DeHomAConf}.
Hence the inclusion of $\Conf^{\ab}_k(G_r)$ in $\Hom(\Z^k,G_r)$ induces an isomorphism on $n$-dimensional cohomology. The desired result follows from the commutativity of
\[
\diagram
\Conf_k^{\ab}(G_r) \dto \rto & \Conf_k^{\ab}(G_{r+1}) \dto \\
\Hom(\Z^k,G_r) \rto & \Hom(\Z^k,G_{r+1}), 
\enddiagram
\]
the description of $d_{k,r}$ and the fact that $n+2 \geq (n-1)/2$ in the case of $Sp_r$.
\end{proof}

\begin{remark}
Let $T_r$ be the standard maximal torus of $G_r$. By Corollary 6.3 in \cite{RS},
the $FI_W$--module $\{ H_n(T_r^k;\Q) \}_{r \geq 1}$ is finitely generated 
(see also Proposition 5.3 and 5.9 in \cite{RS}). The inclusion $\Conf_k(T_r) \to T_r^k$ 
is $W_{G_r}$--equivariant, hence the induced morphisms in rational homology form a map of $FI_W$--modules
\[ \{ H_n(\Conf_k(T_r);\Q) \}_{r \geq 1} \to \{ H_n(T_r^k;\Q) \}_{r \geq 1}. \]
Since the inclusion $\Conf_k(T_r) \to T_r^k$ induces an isomorphism in rational 
homology up to dimension $r-2$, the $FI_W$--module $\{ H_n(\Conf_k(T_r);\Q) \}_{r \geq 1}$ is 
also finitely ge-nerated. By Proposition 6.6 in \cite{RS}, the consistent sequence 
$\{ H_i(G_r/T_r;\Q) \}_{r \geq 1}$ of $W_{G_r}$--modules extends to a finitely generated $FI_W$--module
and therefore the $FI_W$--module $\{ H_m(\Conf_k(T_r) \times G_r/T_r;\Q) \}_{r \geq 1}$
is finitely generated by Proposition 5.2 in \cite{Wi}. Therefore this sequence satisfies 
uniform representation stability by Theorem 4.27 in \cite{Wi}. By Theorem 4.20
in \cite{RS}, the sequence $\{ \Conf_k(T_r) \times_{W_{G_r}} G_r/T_r \}_{r \geq 1}$ satisfies
strong rational homological stability. But $H_m(\Conf_k(T_r) \times_{W_{G_r}} G_r/T_r;\Q)$ is
isomorphic to
\[  H_m(\Conf_k(T_r) \times G_r/T_r;\Q)^{W_{G_r}} \cong H_m(\Conf_k^{\ab}(G_r);\Q), \]
so the sequence $\{ \Conf_k^{\ab}(G_r) \}_{r \geq 1}$ satisfies strong rational homological
stability. This gives another proof of the strong rational homological stability part of 
Theorem \ref{StabilityForRank} in the same spirit of \cite{RS}.
\end{remark}

\subsection{Stability with respect to the number of commuting elements}

Next we study stability with respect to the number of elements in the tuple.
For a general compact Lie group $G$, there is no canonical map from $\Conf_k^{\ab}(G)$
to $\Conf_{k+1}^{\ab}(G)$. But we can use the projection maps to define a
co$FI$--space. Recall that $FI$ is the category with objects $[n]=\{1,\ldots,n\}$
and morphisms given by injective maps.

By analogy with \cite{Ch}, we consider the funtor $FI^{\op} \to \Top$ that takes $[n]$ to $\Conf_n^{\ab}(G)$
and an injective map $f \colon [n] \to [m]$ to the map
\begin{gather*}
\Conf_m^{\ab}(G) \to \Conf_n^{\ab}(G) \\
(g_1,\ldots,g_m) \mapsto (g_{f(1)}, \ldots, g_{f(n)}).
\end{gather*}
We are interested in the associated sequences of $\Sigma_r$--representations 
\[ H^j(\Conf_1^{\ab}(G);\Q) \to H^j(\Conf_2^{\ab}(G);\Q) \to \ldots \]
which are induced by the standard inclusions $[r] \to [r+1]$. Next theorem
shows that these sequences satisfy uniform representation stability in the 
sense of \cite{CF}. 

\begin{theorem}
\label{StabilityWithRespectToTuple}
Let $G$ be a group in $\P$ and $j$ a nonnegative integer. Then the
sequences of $\Sigma_k$--representations $ \{ H^j(\Conf_k^{\ab}(G);\Q) \}_{k \geq 1}$
satisfy uniform representation stability.
\end{theorem}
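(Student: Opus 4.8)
The plan is to reduce the statement to known representation stability results for configuration spaces of a torus and for flag varieties, using the $W_G$-invariant description from Proposition \ref{CohomologyFormula}. Write $G$ as a product of classical groups with maximal torus $T$ and Weyl group $W_G$. By Proposition \ref{CohomologyFormula}, for each $k$ there is an isomorphism of graded algebras $H^*(\Conf_k^{\ab}(G);\Q) \cong H^*(G/T \times \Conf_k(T);\Q)^{W_G}$, and by the Künneth theorem the right-hand side is $\bigl( H^*(G/T;\Q) \otimes H^*(\Conf_k(T);\Q) \bigr)^{W_G}$. The first step is to check that these isomorphisms are natural with respect to the co$FI$-structure: the projection maps $\Conf_m^{\ab}(G) \to \Conf_n^{\ab}(G)$ are induced by the $G$-equivariant (in fact $N_G(T)$-equivariant) projections $\Conf_m(T) \to \Conf_n(T)$, and $\phi$ is natural for these, so the co$FI$-structure on $\{H^*(\Conf_k^{\ab}(G);\Q)\}$ corresponds under the isomorphism to $H^*(G/T;\Q) \otimes \bigl(\text{co}FI\text{-structure on } H^*(\Conf_k(T);\Q)\bigr)$, all carried out $W_G$-equivariantly. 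Thus it suffices to prove that the tensor product, taken with the trivial co$FI$-structure on the fixed factor $H^*(G/T;\Q)$, satisfies uniform representation stability after passing to $W_G$-invariants.

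Next I would invoke Church's theorem (\cite{Ch}, Theorem 1 or its $FI$-module reformulation): for a connected orientable manifold $M$ of dimension at least two, the sequences $\{H^j(\Conf_k(M);\Q)\}_k$ form a finitely generated $FI$-module, hence satisfy uniform representation stability, with stable range depending only on $j$ and (for $M$ noncompact) some geometric data. Here $M = T$ is the torus, which is connected, orientable, and of dimension $r = \operatorname{rank}(G) \geq 1$. When $r \geq 2$ this applies directly. When $r = 1$, i.e. $G = SU_2$, the space $\Conf_k(S^1)$ is homotopy discrete with $(k-1)!$ components; one handles this case separately using the explicit homotopy equivalence $\Conf_k^{\ab}(SU_2) \simeq \coprod_{(k-1)!/2} S^2 \times S^1$ from the earlier proposition, checking the $\Sigma_k$-action on $H^j$ directly — the relevant $\Sigma_k$-representations are built from permutation modules $\Q[\Sigma_k/\Sigma_{k-1}]$-type pieces, which are well known to be uniformly representation stable. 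For $r \geq 2$, the remaining work is formal: a finitely generated $FI$-module tensored (degreewise over $j$, finitely many degrees contributing to any fixed $H^j(\Conf_k^{\ab}(G))$ since $H^*(G/T)$ is finite-dimensional) with a fixed finite-dimensional vector space with trivial action is still a finitely generated $FI$-module, and the category of finitely generated $FI$-modules over $\Q$ is abelian and closed under subobjects (Noetherianity, \cite{CF} or the $FI$-module literature), so the $W_G$-invariants $\bigl(H^*(G/T;\Q) \otimes H^*(\Conf_k(T);\Q)\bigr)^{W_G}$ — being the image of the idempotent $\frac{1}{|W_G|}\sum_{w \in W_G} w$, which commutes with the $FI$-maps — form a finitely generated $FI$-module as well. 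Finitely generated $FI$-modules over $\Q$ satisfy uniform representation stability, which gives the conclusion.

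The main obstacle I anticipate is bookkeeping around the $W_G$-action commuting with the co$FI$-structure and verifying that the Künneth isomorphism is natural in the co$FI$-variable while being $W_G$-equivariant; this is where one must be careful that the $\Sigma_k$-action (coming from $FI$-automorphisms of $[k]$) and the $W_G$-action (coming from conjugation, i.e. from $N_G(T)/T$) act on genuinely independent factors, so that taking $W_G$-invariants is an exact operation preserving finite generation as an $FI$-module. A secondary point requiring care is the low-rank case $r=1$, which is not covered by Church's theorem and must be treated by hand as indicated above; but since the homotopy type of $\Conf_k^{\ab}(SU_2)$ has already been pinned down, this is straightforward. Everything else — finite generation of $\{H^j(\Conf_k(T);\Q)\}$, Noetherianity of $FI$-modules over $\Q$, and the passage from finite generation to uniform representation stability — is quotable from \cite{Ch}, \cite{CF}, and the $FI$-module literature.
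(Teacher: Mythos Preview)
Your argument for $\operatorname{rank}(G)\geq 2$ is essentially the paper's: invoke Church's finite-generation result for $\{H^n(\Conf_k(T);\Q)\}_k$, tensor with the constant $FI$-module $H^*(G/T;\Q)$ (the paper cites Proposition 2.3.6 of \cite{CEF} here), pass to $W_G$-invariants (the paper quotes Lemma 7.6 of \cite{RS}; you argue directly via the averaging idempotent and Noetherianity, which amounts to the same thing), and conclude by the finite-generation $\Leftrightarrow$ uniform-representation-stability theorem of \cite{CEF}. The naturality and commuting-actions checks you worry about are left implicit in the paper.

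You are more careful than the paper in flagging the rank-one case, but your proposed fix does not work, and in fact cannot: the statement is false there. For $G=SU_2$ one has $\Conf_k^{\ab}(SU_2)\simeq\coprod_{(k-1)!/2} S^2\times S^1$ for $k\geq 3$, so $\dim_\Q H^0(\Conf_k^{\ab}(SU_2);\Q)=(k-1)!/2$, which is not eventually polynomial in $k$; hence $\{H^0\}_k$ is not a finitely generated $FI$-module and uniform representation stability fails already in degree zero. The underlying error in your sketch is the description of the $\Sigma_k$-action on $\pi_0(\Conf_k(S^1))$: the components are the cyclic orderings of $[k]$, so $H^0(\Conf_k(S^1);\Q)\cong\Q[\Sigma_k/C_k]$ (and after the $\Z/2$-quotient one gets $\Q[\Sigma_k/D_{2k}]$), not a sum of $\Q[\Sigma_k/\Sigma_{k-1}]$-type permutation modules. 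The paper's proof has the same gap, since Church's theorem requires $\dim T\geq 2$; the result should be read with the hypothesis $\operatorname{rank}(G)\geq 2$.
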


\begin{proof}
For any nonnegative integer $n$, the $FI$--module $\{ H^n(\Conf_k(T);\Q) \}_{k \geq 1}$ is finitely generated by Theorem 1 in \cite{Ch}
and so is the constant $FI$--module $ \{ H^m(G/T;\Q) \}_{k \geq 1}$ for all $m$. By K\"unneth theorem and Proposition 2.3.6 of \cite{CEF}, 
the $FI$--module $ \{ H^j(G/T \times \Conf_k(T);\Q) \}_{k \geq 1}$ is also finitely generated.

The map $\phi \colon G/T \times \Conf_k(T) \to \Conf_k^{\ab}(G)$ is $\Sigma_k$--equivariant
if we consider the actions of $\Sigma_k$ given by permuting coordinates on $\Conf_k(T)$ and 
$\Conf_k^{\ab}(G)$, and the trivial action on $G/T$. Therefore the induced map in rational
cohomology is $\Sigma_k$--equivariant. By Lemma 7.6 in \cite{RS}, the $FI$--module
\[ \{ H^j(G/T \times \Conf_k(T);\Q)^{W_G} \}_{k \geq 1} \cong \{ H^j(\Conf_k^{\ab}(G);\Q) \}_{k \geq 1} \]
is finitely generated. Therefore it satisfies uniform representation stability by
Theorem 1.13 in \cite{CEF}.
\end{proof}

Let $\UConf_k^{\ab}(G) = \Conf_k^{\ab}(G)/\Sigma_k$ be the space of unordered
configurations of $k$ commuting elements. Since the quotient $\Conf_k^{\ab}(G) \to \UConf_k^{\ab}(G)$
induces an isomorphism $H^j(\UConf_k^{\ab}(G);\Q) \to H^j(\Conf_k^{\ab}(G);\Q)^{\Sigma_k}$, we can
use Theorem \ref{StabilityWithRespectToTuple} above and Proposition 4.21 from \cite{RS} to obtain the following corollary.

\begin{corollary}
Let $G$ be a group in $\P$ and $j$ a nonnegative integer. There \mbox{exists}
a positive integer $N(j)$ such that if $ k \geq N(j)$, then \mbox{$H^j(\UConf_k^{\ab}(G);\Q) \cong H^j(\UConf_{k+1}^{\ab}(G);\Q)$}.
\end{corollary}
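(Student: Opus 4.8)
The plan is to deduce the corollary by combining the two facts that the excerpt makes available: first, that the transfer argument identifies $H^j(\UConf_k^{\ab}(G);\Q)$ with the $\Sigma_k$--invariants $H^j(\Conf_k^{\ab}(G);\Q)^{\Sigma_k}$; and second, that by Theorem \ref{StabilityWithRespectToTuple} the sequence $\{ H^j(\Conf_k^{\ab}(G);\Q) \}_{k \geq 1}$ satisfies uniform representation stability. The point is that uniform representation stability (equivalently, finite generation as an $FI$--module, via Theorem 1.13 of \cite{CEF}) controls not just the dimensions but the decomposition into irreducible $\Sigma_k$--representations in a way that is eventually described by a fixed collection of padded Young diagrams $V(\lambda)_k$ with partitions $\lambda$ of bounded size, and the multiplicity of each such $V(\lambda)_k$ stabilizes. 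I would then invoke the observation, recorded as Proposition 4.21 in \cite{RS}, that the dimension of the $\Sigma_k$--invariant subspace of $V(\lambda)_k$ is eventually constant in $k$ — indeed it is the multiplicity of the trivial representation, which is $1$ if $\lambda = \varnothing$ and $0$ otherwise, once $k$ is large enough that $V(\lambda)_k$ is genuinely the irreducible indexed by $\lambda$.

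First I would fix $j$ and let $N(j)$ be a stable range for $\{ H^j(\Conf_k^{\ab}(G);\Q) \}_{k \geq 1}$, enlarged if necessary so that for $k \geq N(j)$ each irreducible constituent $V(\lambda)_k$ appearing is in its "stable" form and has the expected invariant dimension. Then for $k \geq N(j)$ the dimension of $H^j(\Conf_k^{\ab}(G);\Q)^{\Sigma_k}$ equals the multiplicity of the trivial representation in $H^j(\Conf_k^{\ab}(G);\Q)$, which equals the stabilized multiplicity of $V(\varnothing)_k$, a number independent of $k$. Composing with the transfer isomorphism gives $\dim_\Q H^j(\UConf_k^{\ab}(G);\Q)$ constant for $k \geq N(j)$, and since these are finite-dimensional $\Q$--vector spaces of equal dimension they are abstractly isomorphic, giving $H^j(\UConf_k^{\ab}(G);\Q) \cong H^j(\UConf_{k+1}^{\ab}(G);\Q)$.

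The main subtlety — really the only one — is that representation stability as stated does not automatically come with an explicit map $\UConf_k^{\ab}(G) \to \UConf_{k-1}^{\ab}(G)$ realizing the isomorphism, so the corollary must be phrased as an abstract isomorphism of cohomology groups rather than one induced by a geometric map. This is consistent with the remark already made in the introduction of the excerpt, so I would simply state it that way. There is also a small bookkeeping point in making sure the transfer identification $H^j(\UConf_k^{\ab}(G);\Q) \cong H^j(\Conf_k^{\ab}(G);\Q)^{\Sigma_k}$ is valid: it holds because $\Sigma_k$ is finite and we work over $\Q$, so averaging over the group gives a splitting and the quotient map induces an injection onto the invariants which is also surjective; this is standard and I would cite it in one line rather than prove it. With those two ingredients in hand the corollary is immediate, so I do not expect any genuine obstacle beyond choosing the stable range $N(j)$ large enough to absorb both the representation-stability range and the range in which Proposition 4.21 of \cite{RS} applies.
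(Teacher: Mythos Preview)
Your proposal is correct and follows exactly the paper's approach: the paper's proof is the single sentence preceding the corollary, which invokes the transfer isomorphism $H^j(\UConf_k^{\ab}(G);\Q) \cong H^j(\Conf_k^{\ab}(G);\Q)^{\Sigma_k}$, Theorem \ref{StabilityWithRespectToTuple}, and Proposition 4.21 of \cite{RS}, precisely as you do. Your expansion of what Proposition 4.21 actually says (that the multiplicity of the trivial representation stabilizes) and your remark about the absence of a geometric map inducing the isomorphism are both accurate and match the paper's own comment following the corollary.
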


Note that the maps $\Conf_k^{\ab}(G) \to \Conf_{k-1}^{\ab}(G)$ that forget the
last coordinate do not induce maps $\UConf_k^{\ab}(G) \to \UConf_{k-1}^{\ab}(G)$
and there is no canonical map between these spaces that may induce these isomorphisms.

\section{Unstable cohomology}
\label{SecUnstable}

In this section we include the computation of the cohomology groups of $\Conf_2^{\ab}(G)$ for any group $G$ in
$\P$ of rank two, the cohomology groups of $\Conf_3^{\ab}(U_2)$
and the one-dimensional rational homology and cohomology of $\Conf_k^{\ab}(G)$
for any group $G$ in $\P$ of rank at least two, all of these over fields whose
characteristics do not divide the order of the Weyl group of the respective group.
We also determine the cohomology rings of $\Conf_2^{\ab}(G)$ and $\UConf_2^{\ab}(G)$
over fields of characteristic different from two, for $G=U_2$ and $G = S^1 \times SU_2$.

\begin{proposition}
\label{FirstHomology}
Let $G$ be a group in the family $\P$ of rank at least two and
$F$ a field whose characteristic does not divide the order of the Weyl group
of $G$. Then
\[ H_1(\Conf_k^{\ab}(G);F) \cong \pi_1(G)^k \otimes_{\Z} F, \]
\[ H^1(\Conf_k^{\ab}(G);F) \cong \Hom(\pi_1(G)^k,F). \]
\end{proposition}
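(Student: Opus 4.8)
The plan is to compute $H_1$ and $H^1$ directly from the isomorphism of Proposition~\ref{CohomologyFormula}, namely $H_*(\Conf_k^{\ab}(G);F) \cong H_*(G/T \times \Conf_k(T);F)^{W_G}$, by reducing everything to the already-understood homology of tori. First I would note that since $G$ has rank at least two, $\Conf_k(T)$ is path-connected, and by Theorem~3.2 in \cite{GGG} the inclusion $\Conf_k(T) \hookrightarrow T^k$ is $(r-1)$-connected with $r \geq 2$, so it induces an isomorphism on $H_1(-;F)$ and on $H_0$; this isomorphism is $W_G$-equivariant. Combined with the Künneth theorem and the fact that $G/T$ is simply connected (for $G$ in $\P$), one gets a $W_G$-equivariant isomorphism $H_1(G/T \times \Conf_k(T);F) \cong H_1(T^k;F) \cong H_1(T;F)^{\oplus k}$, where $W_G$ acts diagonally through its action on $H_1(T;F)$ (the permutation of the $k$ factors plays no role for a single factor's worth of $H_1$, since $H_0(T;F) = F$ is fixed). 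Taking $W_G$-invariants then gives $H_1(\Conf_k^{\ab}(G);F) \cong (H_1(T;F)^{W_G})^{\oplus k}$.

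The next step is to identify $H_1(T;F)^{W_G}$ with $\pi_1(G) \otimes_{\Z} F$. Here I would use the standard exact sequence $0 \to \pi_1(T) \to \pi_1(T) \to \pi_1(G) \to 0$ is not quite right — rather, one has the coexact sequence relating the coroot lattice and the integral lattice of $T$: writing $\Lambda = \pi_1(T) = H_1(T;\Z)$ and letting $\Lambda_{\mathrm{cr}}$ be the coroot lattice, there is an exact sequence $0 \to \Lambda_{\mathrm{cr}} \to \Lambda \to \pi_1(G) \to 0$. After tensoring with $F$ (whose characteristic does not divide $|W_G|$) and using that the coinvariants $(H_1(T;F))_{W_G}$ agree with the invariants $H_1(T;F)^{W_G}$ in this coprime situation, a Maschke-type averaging argument identifies $H_1(T;F)^{W_G} \cong \pi_1(G) \otimes_{\Z} F$. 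Concretely: the action of $W_G$ on $H_1(T;F)$ has the subspace spanned by the coroots as the sum of all nontrivial isotypic pieces of the reflection representation that can carry invariants, and modding out by it gives precisely $\pi_1(G)\otimes F$; since $\mathrm{char}\,F \nmid |W_G|$, invariants and coinvariants coincide, so $H_1(T;F)^{W_G} \cong (\pi_1(G)\otimes_{\Z}F)$. Assembling, $H_1(\Conf_k^{\ab}(G);F) \cong (\pi_1(G)\otimes_{\Z}F)^{\oplus k} \cong \pi_1(G)^k \otimes_{\Z} F$. The cohomology statement then follows by the universal coefficient theorem over the field $F$, dualizing: $H^1(\Conf_k^{\ab}(G);F) \cong \Hom_F(H_1(\Conf_k^{\ab}(G);F),F) \cong \Hom_F(\pi_1(G)^k\otimes_{\Z}F, F) \cong \Hom(\pi_1(G)^k, F)$.

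The main obstacle is the identification $H_1(T;F)^{W_G} \cong \pi_1(G)\otimes_{\Z}F$ in a way that is clean and correct for every group in $\P$, including the $SU_n$ and $Sp_n$ factors where $\pi_1$ is trivial (so the claim is that the reflection representation has no $W_G$-invariants — true for the irreducible Weyl groups of types $A$ and $C$) and the $U_n$ factors where $\pi_1 = \Z$. For a general product $G = \prod G_i$, both $\pi_1$ and $H_1(T;F)^{W_G}$ split as direct sums over the factors, and $W_G = \prod W_{G_i}$ acts factorwise, so it suffices to handle each classical factor. For $U_n$: $T = (S^1)^n$, $W = \Sigma_n$ permuting coordinates, so $H_1(T;F)^{\Sigma_n} = F$ (the diagonal), matching $\pi_1(U_n)\otimes F = \Z\otimes F = F$. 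For $SU_n$ and $Sp_n$ the reflection representation has trivial invariants, matching the trivial $\pi_1$. So the obstacle is really just bookkeeping across the product and invoking the coprime-characteristic hypothesis to pass between invariants and coinvariants; I do not anticipate a genuine difficulty, only the need to state the lattice-theoretic input cleanly. An alternative, perhaps slicker route that sidesteps lattices entirely: apply Proposition~\ref{CohomologyFormula} together with the $(r-1)$-connectivity of $\Conf_k(T)\to T^k$ to reduce to $H_1(\Hom(\Z^k,G);F)$, and then cite the known computation $H_1(\Hom(\Z^k,G);\Z) \cong \pi_1(G)^k$ from the literature on spaces of commuting tuples (e.g. via \cite{B} or \cite{GPS}); this makes the proof a one-line corollary of Proposition~\ref{DeHomAConf} once that $H_1$ computation is cited.
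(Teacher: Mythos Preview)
There is a genuine gap at the point where you assert that the $(r-1)$-connectivity of $\Conf_k(T) \hookrightarrow T^k$ yields an isomorphism on $H_1(-;F)$. When $r=2$ (so $G$ is $U_2$, $SU_3$, $Sp_2$, $S^1\times SU_2$, or a product of two rank-one factors), the map is only $1$-connected, and a $1$-connected map in general induces only a \emph{surjection} on $H_1$ (consider $S^1 \to \ast$). The paper addresses exactly this case separately: for $r \geq 3$ the connectivity already gives an isomorphism on $\pi_1$, while for $r=2$ it invokes Section~3 of \cite{Bir} to show that the kernel of $\pi_1(\Conf_k(T)) \twoheadrightarrow \pi_1(T^k)$ is precisely the commutator subgroup, whence the map is an isomorphism on $H_1$. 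Your proposed alternative route via Proposition~\ref{DeHomAConf} has the same defect, since that proposition only gives an isomorphism in degrees $n \leq r-2$ and hence says nothing about $H_1$ when $r=2$.

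Once the $r=2$ case is patched, your two approaches diverge from the paper only in the final identification. The paper follows what you call the ``alternative'' route: it identifies the $W_G$-invariant part with $H_1(\Hom(\Z^k,G);F)$ via the analogous formula of Baird \cite{B}, and then cites Theorem~1.1 of \cite{GPS} to get $\pi_1(\Hom(\Z^k,G)) \cong \pi_1(G)^k$. Your main route---computing $H_1(T;F)^{W_G}$ directly by a factor-by-factor analysis of the reflection representation for $U_n$, $SU_n$, $Sp_n$---is correct and has the virtue of being self-contained, avoiding the appeal to \cite{GPS}; the paper's route has the virtue of packaging the answer uniformly without a case analysis.
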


\begin{proof}
Let $T$ be a maximal torus of $G$ and $W$ the corresponding Weyl group.
By Theorem 3.2 in \cite{GGG}, the inclusion $\Conf_k(T) \to T^k$ is
$(\dim(T)-1)$--connected. Hence if the dimension of $T$ at least three, 
it induces an isomorphism on the fundamental group. If the dimension of $T$ 
is two, this inclusion induces an epimorphism on the fundamental group, 
but Section 3 of \cite{Bir} shows that the kernel is the commutator subgroup, 
hence the inclusion induces an isomorphism on the first homology group. To
sum up, if $T$ has dimension at least two, the space $\Conf_k(T)$ is path-connected
and this inclusion induces an isomorphism on $H_1(-;F)$. By Proposition \ref{CohomologyFormula}, 
we have that $H^1(\Conf_k^{\ab}(G);F)$ is isomorphic to
\[ \left[ H_0(G/T;F) \otimes_F H_1(\Conf_k(T);F) \oplus H_1(G/T;F) \otimes_F H_0(\Conf_k(T);F) \right]^W . \]
Since the inclusion $\Conf_k(T) \to T^k$ is $W$--equivariant, it induces an isomorphism $H_1(T^k;F)^W \cong H_1(\Conf_k(T);F)^W$,
hence the expression above is isomorphic to
\[ \left[ H_0(G/T;F) \otimes_F H_1(T^k;F) \oplus H_1(G/T;F) \otimes_F H_0(T^k;F) \right]^W \cong H_1(\Hom(\Z^k,G);F). \]
This last isomorphism holds by Theorem 4.3 in \cite{B} and the second part of the proof of 
Proposition 3.2 in \cite{RS}, since $\Hom(\Z^k,G)$ is path-connected
(see Proposition 2.5 and comment after Definition 2.7 in \cite{AG}). Finally, the
inclusion of $\Hom(\Z^k,G)$ in $G^k$ induces an isomorphism on the fundamental group
by Theorem 1.1 in \cite{GPS} and the desired result in homology follows. The cohomology
statement follows directly from the universal coefficient theorem. 
\end{proof}

These formulas do not necessarily hold when the group $G$ has rank one. For example, we saw in
Section \ref{SecConfigurations} that $\Conf_k(S^1) \simeq S^1 \times \Sigma_{k-1}$. Hence
\[ H_1(\Conf_k(S^1);\Q) \cong \Q^{(k-1)!} \ncong \Q^k \cong H_1( (S^1)^k ; \Q) \]
if $k \geq 2$. Note that the fundamental group of these two spaces based at any point do not agree, since the path-components
of $\Conf_k(S^1)$ are contractible. Hence we can not hope to have a result like Theorem 1.1 in \cite{GPS} without
further assumptions.

\begin{remark}
By the commutativity of the diagram
\[ 
\diagram
G/T \times \Conf_k(T) \rto \dto & \Conf_k^{\ab}(G) \dto \\
G/T \times T^k \rto & \Hom(\Z^k,G),
\enddiagram
\]
the isomorphisms from Proposition \ref{FirstHomology} are induced by the inclusion $\Conf_k^{\ab}(G) \to G^k$.
\end{remark}

We now proceed to compute the cohomology groups of $\Conf_2^{\ab}(G)$ for
any group $G$ in $\P$ of rank $2$ over fields whose characteristics do not
divide the order of the Weyl group of $G$. The case of $ T = S^1 \times S^1$ 
corresponds to $\Conf_2(T)$ since $T$ is abelian. By Example 2.6 in \cite{Co},
since $T$ is a topological group, we have
\[ \Conf_2(T) \cong T \times (T-\{1\}) \simeq T \times (S^1 \vee S^1) \]
and its cohomology ring over any field $F$ is easily determined by K\"unneth theorem, namely
\[ H^*(\Conf_2(T);F) \cong \Lambda_F(x_1,y_1) \otimes_F F[z_1,w_1]/(z_1^2,w_1^2,z_1w_1). \]

If $F$ is a field whose characteristic does not divide the order of the Weyl group $W_G$ of $G$, then we have 
$H^*(\Conf_2^{\ab}(G);F) \cong H^*(G/T \times \Conf_2(T);F)^{W_G}$ by Proposition \ref{CohomologyFormula}, so 
we need to determine $H^n(\Conf_2(T);F)$ and $H^n(G/T;F)$ as $FW_G$--modules in each case. It is clear that
$H^0(\Conf_2(T);F)$ and $H^0(G/T;F)$ are the trivial one-dimensional representation, and that $H^n(\Conf_2(T);F)=0$
for $n\geq 4$. Note that $H^1(T;\Z)$ and $H^2(G/T;\Z)$ correspond to the integral representation that defines $W_G$ 
as a finite reflection group over $\Z$. For $U_2$, $SU_3$ and $Sp_2$, this follows from Lemma III.4.16, Corollary III.4.17, 
Theorem III.5.5, Theorem III.5.6 and Corollary III.5.7 of \cite{MT}. For $ S^1 \times SU_2$, it is clear that its Weyl group $\Z/2$ acts
trivially on the first factors of $T$ and $(S^1 \times SU_2)/(S^1 \times S^1) \cong S^1 \times (SU_2/S^1)$, and it acts on
the second factors in the same way as the Weyl group of $SU_2$. We also have $H^1(T;F) \cong H^1(T;\Z) \otimes F$ and $H^2(G/T;F) \cong H^2(G/T;\Z) \otimes F$
in all these cases. We can determine the rest of $H^n(G/T;F)$ as $FW_G$--modules using the product structure
of $H^*(G/T;F)$, which can be found in Theorem III.5.5, Theorem III.5.6 and Corollary III.5.7 in \cite{MT}. Finally,
the inclusion $S^1 \vee S^1 \to T-\{1\}$ is a $W_G$--homotopy equivalence, hence we also know $H^1(S^1 \vee S^1;F)$
as an $FW_G$--module. Using K\"unneth theorem, the character tables of $W_G$ for each $G$, and the previous considerations, 
we find the desired representations which we display in the following table:

\begin{table}[h]
\begin{center}
\begin{minipage}{20cm}
\caption{$FW_G$--module structure of $H^n(\Conf_2(T);F)$ and $H^n(G/T;F)$.}
\label{table1}
\begin{tabular}{|c||c|c||c|c||c|c||c|c|}
\hline 
$n\backslash G$ & \multicolumn{2}{|c||}{$U_2$} & \multicolumn{2}{|c||}{$S^1 \times SU_2$} & \multicolumn{2}{|c||}{$SU_3$} & \multicolumn{2}{|c|}{$Sp_2$}  \\
\hline
 & $\Conf_2(T)$ & $G/T$ & $\Conf_2(T)$ & $G/T$ & $\Conf_2(T)$ & $G/T$ & $\Conf_2(T)$ & $G/T$ \\
 \hline
$0$ & $1$ & $1$ & $1$ & $1$ & $1$ &  $1$ & $1$ & $1$ \\
\hline
$1$ & $2 \oplus 2\sigma$ & $0$ & $2 \oplus 2\sigma$ & $1$ & $2\std$ &  $0$ & $2d$ & $0$ \\
\hline
$2$ & $2 \oplus 3\sigma$ & $\sigma$ & $2 \oplus 3\sigma$ & $\sigma$ & $1 \oplus \std \oplus 2\sgn$ &  $\std$ & $a\oplus b \oplus 2c \oplus 1$ & $d$ \\
\hline
$3$ & $1\oplus \sigma$  & $0$  & $1\oplus \sigma$  & $\sigma$ &  $\std$  & $0$  & $d$ & $0$ \\
\hline
$4$ & $0$  & $0$  & $0$  & $0$ &  $0$  & $\std$  & $0$  & $a \oplus b$ \\
\hline
$5$ & $0$  & $0$  & $0$  & $0$ &  $0$  & $0$  & $0$  & $0$ \\
\hline
$6$ & $0$  & $0$  & $0$  & $0$ &  $0$  & $\sgn$  & $0$  & $d$ \\
\hline 
$7$ & $0$  & $0$  & $0$  & $0$ &  $0$  & $0$  & $0$  & $0$ \\
\hline
$8$ & $0$  & $0$  & $0$  & $0$ &  $0$  & $0$  & $0$  & $c$ \\
\hline
$n \geq 9$ & $0$  & $0$  & $0$  & $0$ &  $0$  & $0$  & $0$  & $0$ \\
\hline
\end{tabular}
\end{minipage}
\end{center}
\end{table}

In this table we are denoting by $1$ the trivial one-dimensional representation of $\Z/2$, $\Sigma_3$ and $D_8$.
The sign representation of $\Z/2$ is denoted by $\sigma$. We denote by $\std$ and $\sgn$ the standard representation 
and the alternating representation of $\Sigma_3$, respectively. The two-dimensional representation of $D_8 = \langle r, s \mid r^4, s^2, (sr)^2 \rangle$ is
denoted by $d$, while $a$, $b$, $c$ are the one-dimensional representations with kernels $\langle s \rangle$, $\langle sr \rangle$ and $\langle r \rangle$,
respectively. Since all these representations are self-dual, the only double tensor products of these representations that contain $1$ as a subrepresentation are 
of the form $V \otimes V$ and therefore we obtain the following expressions in terms of multiplicities of irreducible subrepresentations of $ A^k = H^k(\Conf_2(T);F)$ 
\begin{align*}
 & \dim_F H^n(\Conf_2^{\ab}(U_2);F) = \mult_1 A^n + \mult_{\sigma} A^{n-2}, \\
 & \dim_F H^n(\Conf_2^{\ab}(S^1 \times SU_2);F) = \mult_1 (A^n \oplus A^{n-1}) + \mult_{\sigma} (A^{n-2} \oplus A^{n-3}), \\
 & \dim_F H^n(\Conf_2^{\ab}(SU_3);F) = \mult_1 A^n + \mult_{\std} (A^{n-2} \oplus A^{n-4}) + \mult_{\sgn} A^{n-6}, \\
 & \dim_F H^n(\Conf_2^{\ab}(Sp_2);F)  = \mult_1 A^n + \mult_d (A^{n-2} \oplus A^{n-6}) + \mult_a A^{n-4}  \\
 & \hspace{12em} \, + \mult_b A^{n-4} + \mult_c A^{n-8}.
\end{align*}
where $\mult_V W$ denotes the multiplicity of $V$ as a subrepresentation of $W$. Next proposition is obtained using these equations 
and Table \ref{table1}.

\begin{proposition}
\label{CohomologyGroups}
Let $G$ be a group in $\P$ of rank $2$ and $F$ a field whose characteristic does not divide the order of the Weyl group
of $G$. Then the dimension of $H^n(\Conf_2^{\ab}(G);F)$ as a vector space over $F$ is given by the following table.
\begin{table}[h]
\begin{minipage}{12cm}
\begin{center}
\captionsetup{justification=centering}
\caption{Dimension of $H^n(\Conf_2^{\ab}(G);F)$.}
\label{table2}
\begin{tabular}{|c|c|c|c|c|c|}
\hline 
$n\backslash G$ & $S^1 \times S^1$ & $U_2$ & $S^1 \times SU_2$ & $SU_3$ & $Sp_2$  \\
\hline
$0$ & $1$ & $1$ & $1$ & $1$ & $1$ \\
\hline
$1$ & $4$ & $2$ & $3$ & $0$ & $0$ \\
\hline
$2$ & $5$ & $2$ & $4$ & $1$ & $1$ \\
\hline
$3$ & $2$  & $3$  & $5$  & $2$ &  $2$  \\
\hline
$4$ & $0$  & $3$  & $6$  & $1$ &  $0$  \\
\hline
$5$ & $0$  & $1$  & $4$  & $3$ &  $1$  \\
\hline
$6$ & $0$  & $0$  & $1$  & $1$ &  $2$  \\
\hline 
$7$ & $0$  & $0$  & $0$  & $1$ &  $2$  \\
\hline
$8$ & $0$  & $0$  & $0$  & $2$ &  $0$  \\
\hline
$9$ & $0$  & $0$  & $0$  & $0$ &  $1$  \\
\hline
$10$ & $0$  & $0$  & $0$  & $0$ &  $2$  \\
\hline
$n \geq 11$ & $0$  & $0$  & $0$  & $0$ & $0$ \\
\hline
\end{tabular}
\end{center}
\end{minipage}
\end{table}

\end{proposition}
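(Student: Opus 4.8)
The plan is to deduce the table directly from Proposition~\ref{CohomologyFormula}, the K\"unneth theorem, and the multiplicity identities displayed just above the statement. The rank-two torus case $G = S^1 \times S^1$ is separate and essentially already done: here $\Conf_2^{\ab}(G) = \Conf_2(G)$, whose cohomology ring over $F$ was identified above as $\Lambda_F(x_1,y_1) \otimes_F F[z_1,w_1]/(z_1^2,w_1^2,z_1w_1)$, and reading off the Poincar\'e series gives the dimensions $1,4,5,2,0,\dots$ in the first column of Table~\ref{table2}. For the four non-abelian groups, Proposition~\ref{CohomologyFormula} identifies $H^n(\Conf_2^{\ab}(G);F)$ with $H^n(G/T \times \Conf_2(T);F)^{W_G}$, and the K\"unneth theorem rewrites the underlying $FW_G$--module, before taking invariants, as $\bigoplus_{p+q=n} H^p(G/T;F) \otimes_F H^q(\Conf_2(T);F)$.

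Because the characteristic of $F$ does not divide $|W_G|$, the algebra $FW_G$ is semisimple, and for the groups $\Z/2$, $\Sigma_3$ and $D_8$ every occurring irreducible is self-dual; hence for an irreducible $V$ and any $FW_G$--module $A$ one has $\dim_F (V \otimes_F A)^{W_G} = \dim_F \Hom_{W_G}(V,A) = \mult_V A$. Since the factors $H^p(G/T;F)$ listed in Table~\ref{table1} are concentrated in even degrees and are sums of a few specific irreducibles, taking $W_G$--invariants of the K\"unneth sum collapses it precisely to the four displayed formulas for $\dim_F H^n(\Conf_2^{\ab}(G);F)$ in terms of the multiplicities $\mult_V A^k$ with $A^k = H^k(\Conf_2(T);F)$. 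It then remains to substitute the $\Conf_2(T)$ columns of Table~\ref{table1} into each formula and sum degree by degree; this bookkeeping reproduces the remaining columns of Table~\ref{table2}, which vanish above degree ten.

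The conceptual content therefore lies entirely in Table~\ref{table1}, and the paragraph preceding the proposition already lays out how to establish it: use the $W_G$--equivariant homotopy equivalences $\Conf_2(T) \cong T \times (T - \{1\})$ and $T - \{1\} \simeq S^1 \vee S^1$, identify $H^1(T;\Z)$ and $H^2(G/T;\Z)$ with the integral reflection representation of $W_G$, invoke the ring structure of $H^*(G/T;F)$ from \cite{MT} to propagate this through the higher degrees, and apply the K\"unneth theorem together with the character tables of $\Z/2$, $\Sigma_3$ and $D_8$. The main obstacle is the genuinely computational $Sp_2$ case, where $D_8$ has a two-dimensional irreducible $d$ and three nontrivial linear characters $a,b,c$: one must correctly decompose $H^1(S^1 \vee S^1;F)$ and the degree-two through degree-eight parts of $H^*(Sp_2/T;F)$ as $D_8$--modules before the arithmetic goes through. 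Everything else is routine multiplicity counting.
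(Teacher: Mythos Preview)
Your approach is exactly the paper's: its proof is the single line ``obtained using these equations and Table~\ref{table1}'', and you have correctly unpacked the invariant-theoretic content behind the displayed multiplicity formulas and the provenance of Table~\ref{table1}. One small correction: your claim that the $H^p(G/T;F)$ listed in Table~\ref{table1} are concentrated in even degrees is false for $G = S^1 \times SU_2$, where Table~\ref{table1} records nonzero classes in degrees $1$ and $3$; this is precisely why the corresponding displayed formula involves $A^{n-1}$ and $A^{n-3}$, and since you ultimately substitute into the given formulas rather than re-derive them, it does not affect the final numbers.
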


We could extract from this process multiplicative generators for the cohomology rings of these spaces, although this
is quite tedious for $SU_3$ and $Sp_2$. Instead, we determine the cohomology rings for $\Conf_2^{\ab}(U_2)$ and 
$\Conf_2^{\ab}(S^1 \times SU_2)$ below.

\begin{proposition}
\label{CohomologyRings}
Let $F$ be a field of characteristic different from two. Then 
\begin{align*}
 & H^*(\Conf_2^{\ab}(U_2);F) \cong F[b_1,c_1,d_2,e_3,f_3]/I. \\
 & H^*(\Conf_2^{\ab}(S^1 \times SU_2);F) \cong F[a_1,x_1,z_1,c_2,d_3,e_3]/J,
\end{align*}
where $I$ is the ideal generated by $d_2^2$, $c_1d_2$, $c_1f_3$, $d_2e_3$, $d_2f_3$, $e_3f_3$ and the obvious relations imposed by graded commutativity, 
and $J$ is the ideal generated by $c_2^2$, $z_1c_2$, $z_1e_3$, $c_2d_3$, $c_2e_3$, $d_3e_3$
and the obvious relations imposed by graded commutativity.
\end{proposition}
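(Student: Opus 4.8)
The plan is to reduce the statement to the explicit invariant-theory computation afforded by Proposition~\ref{CohomologyFormula}. Since $|W_G|=2$ is invertible in $F$, that proposition together with the K\"unneth theorem gives an isomorphism of graded $F$-algebras
\[ H^*(\Conf_2^{\ab}(G);F)\;\cong\;\big(H^*(G/T;F)\otimes_F H^*(\Conf_2(T);F)\big)^{W_G},\qquad W_G=\Z/2, \]
where the right-hand side carries the tensor-product ring structure and the diagonal $W_G$-action. I would first record the two factors as $W_G$-algebras. The ring $H^*(\Conf_2(T);F)\cong\Lambda_F(x_1,y_1)\otimes_F F[z_1,w_1]/(z_1^2,w_1^2,z_1w_1)$ was identified just before Table~\ref{table1}; the homeomorphism $\Conf_2(T)\cong T\times(T-\{1\})$ is $W_G$-equivariant for the conjugation actions, so this tensor splitting is $W_G$-stable, and (since $2$ is invertible) the generators may be chosen so that $x_1,z_1$ span the trivial isotypic summand in degree one and $y_1,w_1$ the sign summand, compatibly with Table~\ref{table1}. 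The ring $H^*(G/T;F)$ is read off from Table~\ref{table1}: for $G=U_2$ it is $F[\gamma]/(\gamma^2)$ with $\gamma$ a degree-two sign class, and for $G=S^1\times SU_2$ it is $\Lambda_F(\tau_1)\otimes_F F[\gamma_2]/(\gamma_2^2)$ with $\tau_1$ a degree-one trivial class and $\gamma_2$ a degree-two sign class.

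Next I would write down the candidate isomorphism out of the stated quotient. For $U_2$, send $b_1\mapsto x_1$, $c_1\mapsto z_1$, $d_2\mapsto y_1w_1$, $e_3\mapsto\gamma y_1$, $f_3\mapsto\gamma w_1$; for $S^1\times SU_2$, send $a_1\mapsto\tau_1$, $x_1\mapsto x_1$, $z_1\mapsto z_1$, $c_2\mapsto y_1w_1$, $d_3\mapsto\gamma_2 y_1$, $e_3\mapsto\gamma_2 w_1$. Each image is $W_G$-invariant, and the defining relations of $I$ (resp.\ $J$) are verified by one-line computations in the ring above, for instance $c_1d_2\mapsto z_1y_1w_1=-y_1(z_1w_1)=0$, $e_3f_3\mapsto\gamma^2y_1w_1=0$ using $\gamma^2=0$, and $z_1e_3\mapsto\gamma_2(z_1w_1)=0$; the graded-commutativity relations hold automatically in characteristic different from two. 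So the assignment extends to a well-defined homomorphism of graded $F$-algebras into $H^*(\Conf_2^{\ab}(G);F)$.

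That this homomorphism is bijective follows from two observations. Surjectivity: in a graded-commutative $F$-algebra generated by homogeneous eigenvectors for a $\Z/2$-action, the invariant subalgebra is generated by the invariant generators together with the products of pairs of anti-invariant generators; here the invariant generators are $x_1,z_1$ (resp.\ $\tau_1,x_1,z_1$), the anti-invariant ones are $y_1,w_1,\gamma$ (resp.\ $y_1,w_1,\gamma_2$), and their nonzero pairwise products are, up to sign, $y_1w_1$, $\gamma y_1$, $\gamma w_1$ (resp.\ $y_1w_1$, $\gamma_2 y_1$, $\gamma_2 w_1$) — precisely the images of the remaining named generators. Injectivity then follows by a dimension count: a direct calculation in the stated quotient gives Poincar\'e polynomial $1+2t+2t^2+3t^3+3t^4+t^5$ for $U_2$ and $1+3t+4t^2+5t^3+6t^4+4t^5+t^6$ for $S^1\times SU_2$, which match degree by degree the dimensions of $H^*(\Conf_2^{\ab}(G);F)$ recorded in Proposition~\ref{CohomologyGroups}, and a surjection of graded vector spaces with equal finite dimension in each degree is an isomorphism.

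The one genuinely laborious part is the bookkeeping in the second and third steps: keeping track of signs, and above all of which monomials in the generators vanish, both in the stated quotient and in $H^*(G/T;F)\otimes H^*(\Conf_2(T);F)$. The structural reason the proof closes is that, besides graded commutativity, the presented relations are exactly those forced by $\gamma^2=\gamma_2^2=0$ and by $z_1w_1=0$ (no square $y_1^2$, $w_1^2$ or $\gamma^2$ is available, and products such as $\gamma y_1\cdot\gamma w_1$ or $z_1\cdot y_1w_1$ vanish for these reasons); the absence of any further relation is not checked by hand but read off from the dimension count. I expect the most delicate point to be correctly identifying the $W_G$-algebra structure of the two tensor factors — in particular getting the anti-invariant classes and the vanishing products right — though this is routine given Table~\ref{table1} and the decomposition $\Conf_2(T)\cong T\times(T-\{1\})$.
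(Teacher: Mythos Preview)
Your proof is correct and follows essentially the same route as the paper: reduce via Proposition~\ref{CohomologyFormula} to the $W_G$-invariant subring of $H^*(G/T;F)\otimes H^*(\Conf_2(T);F)$, exhibit explicit invariant generators, check the listed relations, and conclude by matching the Poincar\'e polynomial against Proposition~\ref{CohomologyGroups}. The only real difference is cosmetic: you first diagonalise the $\Z/2$-action on the degree-one generators (so each generator is an eigenvector), which lets you read off the five invariant ring generators directly as the invariant generators together with pairwise products of anti-invariant ones; the paper instead keeps the permutation basis, initially lists seven invariants (including $g_3=x_1y_1(z_1-w_1)$ and $h_4=a_2x_1y_1$), and then observes $b_1d_2=-2g_3$, $b_1e_3=-2h_4$ to discard the extras. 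Under the change of basis $x_1\mapsto x_1+y_1$, $y_1\mapsto x_1-y_1$, $z_1\mapsto z_1+w_1$, $w_1\mapsto z_1-w_1$ your generators become literally the paper's $b_1,c_1,d_2,e_3,f_3$, so the two presentations coincide.
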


\begin{proof}
We use the multiplicative generators from
\[ H^*(\Conf_2(T);F) \cong \Lambda_F(x_1,y_1) \otimes_F F[z_1,w_1]/(z_1^2,w_1^2,z_1w_1). \]
Let $a_2$ be a generator of $H^2(U_2/T;F)$. The multiplicative generators of $H^*(\Conf_2^{\ab}(U_2);F)$ coming from the proof of Proposition \ref{CohomologyGroups} are 
\begin{align*}
 & b_1=x_1+y_1, \qquad &  f_3=a_2(z_1-w_1), \\
 & c_1=z_1+w_1, & g_3=x_1y_1(z_1-w_1), \\
 & d_2=(x_1-y_1)(z_1-w_1), & h_4=a_2x_1y_1, \\ 
 & e_3=a_2(x_1-y_1). & 
\end{align*}
Since $b_1d_2 = -2g_3$ and $b_1e_3 = -2h_4$, and the characteristic of $F$ is not two, we can leave $g_3$ and $h_4$ out. All these generators square to zero, hence 
a straightforward computation using Table \ref{table2} shows the desired result.
                         
Let $a_1$ and $b_2$ be generators of $H^1(S^1;F)$ and $H^2(S^2;F)$, respectively. In this case the multiplicative generators of $H^*(\Conf_2^{\ab}(S^1 \times SU_2);F)$ 
coming from the computation of the cohomology groups are $a_1$, $x_1$, $z_1$, $c_2 = y_1w_1$, $ d_3 = y_1 b_2$ and $ e_3 = w_1 b_2$. 
These generators square to zero and the calculation follows similarly.
\end{proof}

We can also extract the cohomology rings of the spaces of unordered configurations of two commuting elements in these
two cases.

\begin{proposition}
\label{CohomologyRingsUnordered}
Let $F$ be a field of characteristic different from two. Then 
\begin{align*}
 & H^*(\UConf_2^{\ab}(U_2);F) \cong \Lambda_F(r_1, s_3), \\
 & H^*(\UConf_2^{\ab}(S^1 \times SU_2);F) \cong \Lambda_F(a_1,u_1, v_3). 
\end{align*}
\end{proposition}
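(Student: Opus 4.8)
The plan is to combine the transfer isomorphism with the explicit presentations of Proposition \ref{CohomologyRings}. Since $\Sigma_2$ acts freely on $\Conf_2^{\ab}(G)$ and the characteristic of $F$ is different from $2=|\Sigma_2|$, the covering $\Conf_2^{\ab}(G)\to\UConf_2^{\ab}(G)$ induces an isomorphism
\[ H^*(\UConf_2^{\ab}(G);F)\cong H^*(\Conf_2^{\ab}(G);F)^{\Sigma_2}, \]
where $\Sigma_2$ permutes the two coordinates. This action commutes with the conjugation action of $W_G$ and $\phi$ is equivariant for both (as observed in the proofs of Proposition \ref{CohomologyFormula} and Theorem \ref{StabilityWithRespectToTuple}), so Proposition \ref{CohomologyFormula} upgrades to an isomorphism of graded algebras
\[ H^*(\UConf_2^{\ab}(G);F)\cong H^*(G/T\times\Conf_2(T);F)^{W_G\times\Sigma_2}. \]
It therefore suffices to compute the $\Sigma_2$--action on the multiplicative generators of $H^*(\Conf_2^{\ab}(G);F)$ exhibited in the proof of Proposition \ref{CohomologyRings} and to take invariants.

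Next I would identify the $\Sigma_2$--action on $H^*(\Conf_2(T);F)\cong\Lambda_F(x_1,y_1)\otimes_F F[z_1,w_1]/(z_1^2,w_1^2,z_1w_1)$, noting that the coordinate swap acts trivially on $H^*(G/T;F)$. Under the homeomorphism $\Conf_2(T)\cong T\times(T-\{1\})$, the classes $x_1,y_1$ are pulled back along the first-coordinate projection and $z_1,w_1$ along the difference map $\delta\colon\Conf_2(T)\to T-\{1\}\hookrightarrow T$, $(t_1,t_2)\mapsto t_1^{-1}t_2$. The swap sends $\delta$ to $\iota\circ\delta$, where $\iota$ is inversion on $T$ and hence multiplication by $-1$ on $H^1$, and sends the first-coordinate projection to the second, which is the product of the first with $\delta$; so on degree-one classes the swap acts by $x_1\mapsto x_1+z_1$, $y_1\mapsto y_1+w_1$, $z_1\mapsto -z_1$, $w_1\mapsto -w_1$.

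For $G=U_2$ the generators $b_1=x_1+y_1$, $c_1=z_1+w_1$, $d_2=(x_1-y_1)(z_1-w_1)$, $e_3=a_2(x_1-y_1)$, $f_3=a_2(z_1-w_1)$ then transform by $b_1\mapsto b_1+c_1$, $c_1\mapsto -c_1$, $d_2\mapsto -d_2$ (using $(z_1-w_1)^2=0$), $e_3\mapsto e_3+f_3$, $f_3\mapsto -f_3$. Since $2$ is invertible this order-two automorphism is diagonalizable; a generating set of eigenvectors is $r_1:=b_1+\tfrac12 c_1$, $s_3:=e_3+\tfrac12 f_3$ with eigenvalue $+1$, together with $c_1,d_2,f_3$ with eigenvalue $-1$. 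The relations listed in the ideal $I$, together with $c_1^2=d_2^2=f_3^2=0$, make every product of two of $c_1,d_2,f_3$ vanish, so the only nonzero monomials fixed by $\Sigma_2$ are $1,r_1,s_3,r_1s_3$; as $r_1,s_3$ have odd degree this gives $H^*(\UConf_2^{\ab}(U_2);F)\cong\Lambda_F(r_1,s_3)$. The case $G=S^1\times SU_2$ is identical in structure: the generators $a_1$, $x_1$, $z_1$, $c_2=y_1w_1$, $d_3=y_1b_2$, $e_3=w_1b_2$ transform by $a_1\mapsto a_1$, $x_1\mapsto x_1+z_1$, $z_1\mapsto -z_1$, $c_2\mapsto -c_2$, $d_3\mapsto d_3+e_3$, $e_3\mapsto -e_3$; the eigenvalue-$(+1)$ generators are $a_1$, $u_1:=x_1+\tfrac12 z_1$, $v_3:=d_3+\tfrac12 e_3$, the relations in $J$ kill all products of two of the eigenvalue-$(-1)$ generators $z_1,c_2,e_3$, and one concludes $H^*(\UConf_2^{\ab}(S^1\times SU_2);F)\cong\Lambda_F(a_1,u_1,v_3)$.

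The main obstacle is the second step: computing the $\Sigma_2$--action on $H^*(\Conf_2(T);F)$ correctly. The subtlety is that under the identification $\Conf_2(T)\cong T\times(T-\{1\})$ the coordinate swap is \emph{not} a product map — it mixes the two factors — so it does not merely rescale the chosen generators but acts on $H^1$ by an involution that is a product of a unipotent and a diagonal part; the precise matching between $x_1,y_1$ and $z_1,w_1$ and the signs must be tracked through the homeomorphism of Example 2.6 in \cite{Co}. Once this is in place, the rest is routine linear algebra over a field in which $2$ is invertible, using the explicit presentations of Proposition \ref{CohomologyRings}.
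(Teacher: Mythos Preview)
Your proposal is correct and follows essentially the same approach as the paper's proof: pass to $\Sigma_2$--invariants via the transfer, compute the swap action on the generators $x_1,y_1,z_1,w_1$ through the homeomorphism $\Conf_2(T)\cong T\times(T-\{1\})$, and then read off the action on the generators of Proposition~\ref{CohomologyRings}. Your invariant generators $r_1=b_1+\tfrac12 c_1$, $s_3=e_3+\tfrac12 f_3$ differ from the paper's $2b_1+c_1$, $2e_3+f_3$ only by a scalar, and your eigenvector argument (that all pairwise products of the $(-1)$--eigenvectors $c_1,d_2,f_3$ vanish by the relations in $I$, and likewise for $z_1,c_2,e_3$ in $J$) is in fact a cleaner justification than the paper gives for why no further invariants appear.
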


\begin{proof}
Since the characteristic of $F$ is different from two, we have 
\[H^*(\UConf_2^{\ab}(G);F) \cong H^*(\Conf_2^{\ab}(G);F)^{\Z/2} \]
for the action of $\Z/2$ induced by permuting the coordinates of $\Conf_2^{\ab}(G)$. In turn, under the isomorphism $H^*(\Conf_2^{\ab}(G);F) \cong H^*(G/T \times \Conf_2(T);F)^{W_G}$,
the action of $\Z/2$ on the right is induced by permuting the coordinates of $\Conf_2(T)$ and the trivial action on $G/T$. Recall
that the homeomorphism $ \Conf_2(T) \cong T \times (T-\{1\})$ takes $(a,b)$ to $(a,ba^{-1})$, so the corresponding action of $\Z/2$
on $T \times (T-\{1\})$ takes $(x,y)$ to $(yx,y^{-1})$. It is easy to check that the induced action on the cohomology of 
$\Conf_2(T)$ with coefficients in $F$ in terms of the generators used in the proof of Proposition \ref{CohomologyRings} changes 
the sign of $z_1$ and $w_1$, takes $x_1$ to $x_1+z_1$ and $y_1$ to $y_1+w_1$.

Let $\alpha$ be the nontrivial element of $\Z/2$. Its action on the multiplicative generators of $H^*(\Conf_2^{\ab}(U_2);F)$ 
from Proposition \ref{CohomologyGroups} is given by
\begin{align*}
 & \alpha b_1 = b_1+c_1, \qquad & \alpha c_1 = -c_1, \\
 & \alpha e_3 = e_3+f_3, & \alpha f_3 = -f_3, \\
 & \alpha d_2 = -d_2. &
\end{align*}
Hence $H^*(\UConf_2^{\ab}(U_2);F)$ is multiplicatively generated by $r_1 = 2b_1+c_1$ and \mbox{$s_3=2e_3+f_3$}. These elements
square to zero by graded commutativity and their product generates $H^4(\UConf_2^{\ab}(U_2);F)$.

In the case of $S^1 \times SU_2$, again using the multiplicative generators from Proposition \ref{CohomologyGroups}, this action satisfies
\begin{align*}
 & \alpha a_1 = a_1, \qquad & \alpha c_2 = -c_2, \\
 & \alpha x_1 = x_1+z_1, & \alpha z_1 = -z_1, \\
 & \alpha d_3 = d_3+e_3, & \alpha e_3 = -e_3.
\end{align*}
Hence $H^*(\UConf_2^{\ab}(S^1 \times SU_2);F)$ is multiplicatively generated by $a_1$, $u_1=2x_1+z_1$ and $v_3=2d_3+e_3$. These elements
square to zero by graded commutativity and their products generate the second, fourth and fifth cohomology groups of $\UConf_2^{\ab}(U_2)$
with coefficients in $F$.
\end{proof}

We finish the paper with the calculation of the cohomology groups of $\Conf_3^{\ab}(U_2)$ over any field $F$ of
characteristic different from two. We will need an auxiliary result in group cohomology. The notation $F_2$ stands
for the free group on two generators.

\begin{lemma}
\label{CohomologyOfFreeGroup}
Let $\Z/2$ act on $F_2$ by the automorphism which permutes its two gene-rators $a$ and $b$, and let $M$ be an $RF_2$--module
where $\Z/2$ acts $R$--linearly and satisfying $x(zm)=(xz)(xm)$ for $x\in \Z/2$, $z \in F_2$ and $m\in M$. Then 
there is an isomorphism of $R\Z/2$--modules
\[ H^1(F_2;M) \cong (M \oplus M)/N, \]
where $N$ is the $R$--submodule generated by the elements of the form $(am-m,bm-m)$ with $m \in M$,
and the action of the nontrivial element $\alpha$ of $\Z/2$ on $M \oplus M$ is given by 
\[ \alpha(m,n) = (\alpha n, \alpha m). \]
\end{lemma}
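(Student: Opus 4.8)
The natural approach is to compute $H^1(F_2;M)$ from the standard (inhomogeneous bar) cochain complex, or equivalently from the well-known free resolution of $\Z$ over $\Z F_2$ arising from the two-generator presentation, and then track the $\Z/2$--action through that computation. I would begin by recalling the free crossed-module-free resolution $0 \to (RF_2)^2 \xrightarrow{\partial} RF_2 \xrightarrow{\varepsilon} R \to 0$ coming from the presentation $\langle a,b \mid \ \rangle$: here the rank-two free module has basis $e_a,e_b$ dual to the generators, and $\partial(e_a)=a-1$, $\partial(e_b)=b-1$. Applying $\Hom_{RF_2}(-,M)$ yields the complex $M \xrightarrow{d^0} M\oplus M \xrightarrow{d^1} 0$, where $d^0(m)=(am-m,\ bm-m)$. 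Since the complex is zero in degree $2$, we get immediately $H^1(F_2;M) \cong (M\oplus M)/\mathrm{im}(d^0) = (M\oplus M)/N$ with $N$ exactly the $R$--submodule generated by the elements $(am-m,bm-m)$, which is the stated description at the level of $R$--modules.

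**Tracking the $\Z/2$--action.** The substantive point is to identify the induced $\Z/2$--action on $(M\oplus M)/N$. A degree-$1$ cocycle in the bar complex is a crossed homomorphism (derivation) $\delta\colon F_2 \to M$, i.e.\ $\delta(zw)=\delta(z)+z\delta(w)$; such a $\delta$ is determined freely by the pair $(\delta(a),\delta(b)) \in M\oplus M$, which is the isomorphism above. The swap automorphism $\tau$ of $F_2$ (interchanging $a$ and $b$) acts on a derivation $\delta$ by $(\alpha\cdot\delta)(z) = \alpha\big(\delta(\tau^{-1}z)\big)$, using the compatibility hypothesis $x(zm)=(xz)(xm)$ to check that $\alpha\cdot\delta$ is again a derivation over $F_2$ with the given action. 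Evaluating on generators: $(\alpha\cdot\delta)(a) = \alpha(\delta(\tau a)) = \alpha(\delta(b))$ and $(\alpha\cdot\delta)(b) = \alpha(\delta(a))$. Hence under the identification $\delta \leftrightarrow (\delta(a),\delta(b))=(m,n)$, the action is $\alpha\cdot(m,n) = (\alpha n,\ \alpha m)$, exactly as claimed. Finally I would check that $N$ is $\Z/2$--stable under this action, which it is: $\alpha\big((am-m,bm-m)\big) = (\alpha(bm-m),\ \alpha(am-m)) = (b'(\alpha m)-\alpha m,\ a'(\alpha m)-\alpha m)$ where $a' = \alpha a = b$ and $b' = \alpha b = a$ act via the twisted structure, so this equals $(a(\alpha m)-\alpha m,\ b(\alpha m)-\alpha m) \in N$ after using $x(zm)=(xz)(xm)$. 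Thus the isomorphism descends to $R\Z/2$--modules.

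**Main obstacle.** The only delicate step is the bookkeeping of the $\Z/2$--action: one must be careful that $\Z/2$ does not act $RF_2$--linearly but only semilinearly with respect to the compatibility $x(zm)=(xz)(xm)$, so the induced map on cochains genuinely involves applying $\tau$ to the group argument \emph{and} $\alpha$ to the coefficient simultaneously. Checking that this composite preserves the cocycle (derivation) condition, that it is compatible with the boundary $d^0$ so as to descend to cohomology, and that the resulting formula on generators is the swap-with-twist $\alpha(m,n)=(\alpha n,\alpha m)$ — rather than, say, $(m,n)\mapsto(n,m)$ with no twist, or $(\alpha m,\alpha n)$ with no swap — is where all the care goes. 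Everything else (exactness of the resolution in degree $\geq 2$, freeness of derivations on generator values) is standard. I would organize the write-up so that the module isomorphism is stated first and the equivariance verification is a short self-contained paragraph afterward.
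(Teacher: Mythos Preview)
Your proof is correct and takes essentially the same approach as the paper: identify $H^1(F_2;M)$ with $\Der(F_2,M)/P(F_2,M)$ (equivalently, the cohomology of the two-cell resolution you write down), use that a derivation on a free group is determined freely by its values on the generators to get $\Der(F_2,M)\cong M\oplus M$ and $P(F_2,M)=N$, and then read off the $\Z/2$--action from naturality. The paper's version is terser---it simply invokes naturality in $F_2$ and $M$ rather than writing out the semilinear cochain action and the stability of $N$ as you do---but the content is the same.
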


\begin{proof}
The expression
\[ H^1(F_2;M) \cong \Der(F_2,M)/P(F_2,M) \]
as a quotient of the group of derivations by the subgroup of principal
derivations is natural in $F_2$ and $M$, and any derivation
of $F_2$ is uniquely determined by its values on $a$ and $b$ (see for instance
Exercise IV.2.3 in \cite{Br}). Therefore
$\Der(F_2,M) \cong M \oplus M$ and $P(F_2,M)=N$ with the desired action
of $\Z/2$.
\end{proof}

Since $H^*(\Conf_3^{\ab}(U_2);F) \cong H^*(U_2/T \times \Conf_3(T);F)^{\Z/2}$, we compute first $H^*(\Conf_3(T);F)$ as an $F\Z/2$--module. 
As before, we denote by $1$ the trivial one-dimensional representation and by $\sigma$ the sign representation of $\Z/2$. 

\begin{lemma}
\label{CohomologyOfConfiguration}
Let $\Z/2$ act on $T=S^1 \times S^1$ by permuting coordinates, and let $F$ be a field of characteristic different from two. Then
we have
\[ H^n(\Conf_2(T-\{1\});F) \cong \left\{ \begin{array}{ll}
                                         1 & \text{ if } n=0, \\
                                         2 \oplus 2\sigma & \text{ if } n=1, \\
                                         3 \oplus 2\sigma & \text{ if } n=2, \\
                                         0 & \text{ if } n \geq 3, \end{array} \right. \]

\[ H^n(\Conf_3(T);F) \cong \left\{ \begin{array}{ll}
                                         1 & \text{ if } n=0, \\
                                         3 \oplus 3\sigma & \text{ if } n=1, \\
                                         7 \oplus 7\sigma & \text{ if } n=2, 3, \\
                                         2 \oplus 3\sigma & \text{ if } n=4, \\
                                         0 & \text{ if } n \geq 5, \end{array} \right. \]
as $F\Z/2$--modules.
\end{lemma}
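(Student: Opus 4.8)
The plan is to reduce the statement to a computation of $H^*(\Conf_2(T\setminus\{1\});F)$ as an $F\Z/2$--module and then split off a torus factor. Since $T$ is a topological group, Example 2.6 in \cite{Co} provides a homeomorphism $\Conf_3(T)\cong T\times\Conf_2(T\setminus\{1\})$ sending $(g_1,g_2,g_3)$ to $(g_1,(g_2g_1^{-1},g_3g_1^{-1}))$; because the $\Z/2$--action on $T$ that permutes the two circle factors is a group automorphism, this homeomorphism is $\Z/2$--equivariant for the diagonal actions on both sides. One records the easy fact that $H^*(T;F)$ is $1$, $1\oplus\sigma$, $\sigma$ in degrees $0,1,2$ as an $F\Z/2$--module, after which the K\"unneth theorem reduces the second displayed formula to the first together with a routine tally of isotypic components.

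For $\Conf_2(Y)$ with $Y:=T\setminus\{1\}$: the space $Y$ is a once-punctured torus, hence a $K(F_2,1)$, and the $\Z/2$--action permutes a free basis $a,b$ of $\pi_1(Y)$, which is precisely the hypothesis of Lemma \ref{CohomologyOfFreeGroup}. The Fadell--Neuwirth fibration $\Conf_2(Y)\to Y$ given by the first point has fiber $Y\setminus\{x\}$, a twice-punctured torus and thus a $K(F_3,1)$; choosing the base point $x$ and the fiber base point among the $\Z/2$--fixed points on the diagonal circle of $T$ makes the fibration $\Z/2$--equivariant. Hence $\Conf_2(Y)$ is a $K(\pi,1)$ fitting in an extension $1\to F_3\to\pi\to F_2\to 1$ of $\Z/2$--groups. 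The Serre spectral sequence of this fibration (equivalently, the Lyndon--Hochschild--Serre spectral sequence of the extension) has $E_2^{p,q}=H^p(F_2;H^q(F_3;F))$, which is concentrated in $0\le p,q\le 1$ because both groups are free; it collapses, and it is $\Z/2$--equivariant. This yields $H^0\cong 1$, $H^1\cong H^1(F_2;F)\oplus H^1(F_3;F)^{F_2}$, $H^2\cong H^1(F_2;H^1(F_3;F))$, and $H^n=0$ for $n\ge 3$, all as $F\Z/2$--modules, with $H^1(F_2;F)\cong 1\oplus\sigma$.

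It remains to identify $V:=H^1(F_3;F)\cong H^1(Y\setminus\{x\};F)$ as a module over $F[F_2\rtimes\Z/2]$. Dually, $H_1(Y\setminus\{x\};F)$ has basis $a,b,c$, where $c$ is the class of a small loop around the moving point $x$, and the surjection onto $H_1(Y;F)=\langle a,b\rangle$ has kernel $\langle c\rangle$. The monodromy of the fibration is point-pushing: it acts trivially on $H_1(Y;F)$ and fixes $c$, so on $H_1(Y\setminus\{x\};F)$ it acts by the transvections determined by the intersection form of $Y$; concretely the loop $a$ fixes $a$ and $c$ and sends $b\mapsto b\pm c$, and symmetrically for the loop $b$. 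The involution swaps $a\leftrightarrow b$ and, being orientation-reversing on $T$ while fixing $x$, sends $c\mapsto -c$. Reading this off dually, $V\cong 1\oplus 2\sigma$ as a $\Z/2$--module, with $F_2$ acting by the corresponding transvections on $\langle a^*,b^*,c^*\rangle$; in particular $V^{F_2}=\langle a^*,b^*\rangle\cong 1\oplus\sigma$, whence $H^1\cong(1\oplus\sigma)\oplus(1\oplus\sigma)=2\oplus 2\sigma$. Finally Lemma \ref{CohomologyOfFreeGroup} identifies $H^2\cong H^1(F_2;V)$ with $(V\oplus V)/N$, where $N$ is the line spanned by $((a-1)c^*,(b-1)c^*)$; since $V\oplus V$ splits as $3\cdot 1\oplus 3\sigma$ for the action $\alpha(v,w)=(\alpha w,\alpha v)$ and $N$ lies in the $\sigma$--part, this gives $H^2\cong 3\oplus 2\sigma$. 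Feeding these values into the K\"unneth computation above produces $3\oplus 3\sigma$, $7\oplus 7\sigma$, $2\oplus 3\sigma$ for $H^n(\Conf_3(T);F)$ in degrees $1$, $2$--$3$, $4$.

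The main obstacle is pinning down the $F[F_2\rtimes\Z/2]$--module structure of $H^1$ of the twice-punctured torus: one must simultaneously control the point-pushing monodromy (including the signs coming from the intersection form) and the orientation-reversing involution fixing the punctures, so as to locate the line $N$ inside $V\oplus V$. Everything else --- the equivariant torus splitting, the collapse of the spectral sequence, the mechanical application of Lemma \ref{CohomologyOfFreeGroup}, and the final K\"unneth bookkeeping --- is routine once this module has been determined.
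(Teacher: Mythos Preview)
Your proposal is correct and follows essentially the same route as the paper: the equivariant splitting $\Conf_3(T)\cong T\times\Conf_2(T\setminus\{1\})$, the Fadell--Neuwirth fibration with its collapsing Serre spectral sequence, and Lemma \ref{CohomologyOfFreeGroup} for $H^2$, followed by K\"unneth. The only difference is that the paper pins down the monodromy and the $\Z/2$--action on the fiber by quoting Birman's explicit presentation of $\pi_1(\Conf_3(T))$ and reading off the conjugation action in the resulting group extension, whereas you identify it conceptually as point-pushing transvections governed by the intersection form of the torus (which is exactly what fixes the relative sign needed to see that $N$ sits in the $\sigma$--isotypic part); both computations yield the same answer.
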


\begin{proof}
Let $x_j=(e^{2\pi j/8},e^{2\pi j/8})$ for $j=1,2,3$. We take $(x_1,x_2,x_3)$ as the basepoint of $\Conf_3(T)$. 
By Theorem 5 in \cite{Bir}, the fundamental group of $\Conf_3(T)$ has a presentation with generators 
$a_1$, $a_2$, $a_3$, $b_1$, $b_2$, $b_3$ and $B_{23}$ and relations
\begin{align*}
 & [a_i,a_j]=[b_i,b_j]=1, \\
 & [a_i,B_{23}]=[b_i,B_{23}] = 1, \\
 & [b_3,a_3a_2^{-1}] = [b_3b_2^{-1},a_3] = B_{23}, \\
 & [a_1,b_i]=[b_1,a_i]=1.
\end{align*}
Let $\gamma_1(t)=(e^{-2\pi i t},1)$ and $\gamma_2(t)=(1,e^{-2\pi i t})$. Explicit representatives for these elements are given by the loops 
\begin{align*}
 & a_1(t)=(x_1\gamma_1(t),x_2\gamma_1(t),x_3\gamma_1(t)), \\
 & a_2(t)=(x_1,x_2\gamma_1(t),x_3\gamma_1(t)), \\
 & a_3(t)=(x_1,x_2,x_3\gamma_1(t)), \\
 & b_j(t) = \alpha a_j(t), \\
 & B_{23}(t) = (x_1,x_2,x_3\eta(t)), 
\end{align*}
where $\alpha$ is the nontrivial element of $\Z/2$ and $\eta$ is the loop based at $1$ corresponding to $-\gamma_{2,3}$ in page 7 of \cite{Bir}. 
If we regard the torus as the usual quotient of a square, the loop $x_3\eta$ corresponds to Figure \ref{figure1} below. Note that $\alpha$ permutes $a_j$ with $b_j$ and changes the sign of $B_{23}$.
\begin{figure}[h!]
\centering
\captionsetup{justification=centering}
\includegraphics{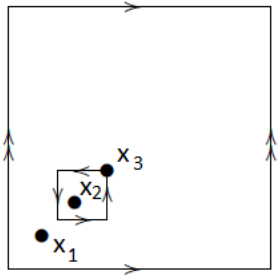}
\caption{The loop $x_3\eta$.}
\label{figure1}
\end{figure}

By Example 2.6 in \cite{Co}, there is a homeomorphism between $\Conf_3(T)$ and \mbox{$T \times \Conf_2(T-\{1\})$} which sends $(y_1,y_2,y_3)$ to $(y_1,y_2y_1^{-1},y_3y_1^{-1})$,
which is clearly $\Z/2$--equivariant. We use this homeomorphism  together with the projection \mbox{$T \times \Conf_2(T-\{1\}) \to \Conf_2(T-\{1\})$} to obtain a presentation of the fundamental 
group of $\Conf_2(T-\{1\})$ based at $(x_1,x_2x_1^{-1},x_3x_1^{-1})$. This presentation has generators $\widetilde{B}_{23}$,
$\widetilde{a}_j$ and $\widetilde{b}_j$ for $j=2,3$, which are the images of the corresponding loops on $\Conf_3(T)$ under
the composition of those two maps. They satisfy the relations
\begin{align*}
 & [\widetilde{a}_i,\widetilde{a}_j]=[\widetilde{b}_i,\widetilde{b}_j]=1, \\
 & [\widetilde{a}_i,\widetilde{B}_{23}]=[\widetilde{b}_i,\widetilde{B}_{23}] = 1, \\
 & [\widetilde{b}_3,\widetilde{a}_3\widetilde{a}_2^{-1}] = [\widetilde{b}_3\widetilde{b}_2^{-1},\widetilde{a}_3] = \widetilde{B}_{23}.
\end{align*}
The action of $\alpha$ permutes $\widetilde{a}_j$ with $\widetilde{b}_j$ and changes the sign of $\widetilde{B}_{23}$, hence
\[ H^1(\Conf_2(T-\{1\};F) \cong \Hom(\pi_1(\Conf_2(T-\{1\}),F) \cong 2\oplus 2\sigma. \]

The fiber of the Fadell-Neuwirth fibration $\Conf_2(T-\{1\}) \to T-\{1\}$ over $x_1$ is $T-\{1,x_1\}$ (see Theorem 1 in \cite{FN}).
Since $T-\{1\} \simeq S^1 \vee S^1$ and $T-\{1,x_1\} \simeq S^1 \vee S^1 \vee S^1$, we obtain that
$H^n(\Conf_2(T-\{1\});F)=0$ if $n\geq 3$. The associated Serre spectral sequence in cohomology 
with coefficients in $F$ collapses at $E_2$ and
\[ H^2(\Conf_2(T-\{1\});F) \cong H^1(T-\{1\};\underline{H^1(T-\{1,x_1\};F)}), \]
where the underlined term indicates local coefficients. We first determine the action of $\pi_1(T-\{1\},x_1)$
on $\pi_1(T-\{1,x_1\},x_2)$ using the extension of groups
\[ 1 \to \pi_1(T-\{1,x_1\},x_2) \stackrel{i_*}{\longrightarrow} \pi_1(\Conf_2(T-\{1\}),(x_1,x_2)) \stackrel{p_*}{\longrightarrow} \pi_1(T-\{1\},x_1) \to 1. \]
The group $\pi_1(T-\{1\},x_1) \cong F_2$ is generated as a free group by the classes $h$ and $v$ of the 
loops $x_1\gamma_1(t)$ and $x_1\gamma_2(t)$. The group $\pi_1(T-\{1,x_1\},x_2) \cong F_3$ is
generated as a free group by the classes $\tilde{h}$, $\tilde{v}$ and $\tilde{r}$ of the loops
$x_2\gamma_1(t)$, $x_2\gamma_2(t)$ and $x_3\eta(t)$. It is easy to check that
\begin{alignat*}{2}
& i_*(\tilde{h}) = \tilde{a}_3, \qquad \qquad && p_*(\tilde{a}_2) = h, \\
& i_*(\tilde{v}) = \tilde{b}_3, && p_*(\tilde{b}_2) = v, \\
& i_*(\tilde{r}) = \tilde{B}_{23}, && p_*(\tilde{a}_3) = p_*(\tilde{b}_3) = 1, 
\end{alignat*}  
and therefore $h$ and $v$ act via conjugation by $\tilde{a}_2$ and $\tilde{b}_2$, respectively.
A straightforward computation using the presentation of $\pi_1(T-\{1,x_1\},x_2)$ shows that the actions
of $h$ and $v$ are given by
\begin{alignat*}{2}
& h \cdot \tilde{h} = \tilde{h}, \qquad \qquad \qquad && v \cdot \tilde{h} = \tilde{r}^{-1}\tilde{v}\tilde{h}\tilde{v}^{-1}, \\
& h \cdot \tilde{v} = \tilde{r}\tilde{h}\tilde{v}\tilde{h}^{-1}, && v \cdot \tilde{v} = \tilde{v}, \\
& h \cdot \tilde{r} = \tilde{r}, && v \cdot \tilde{r} = \tilde{r}.
\end{alignat*}
Let us consider the basis $\{ e_h, e_v, e_r \}$ of $H^1(T-\{1,x_1\};F)$ corresponding to the
elements $\tilde{h}$, $\tilde{v}$ and $\tilde{r}$. Then the action of $h$ and $v$ on $H^1(T-\{1,x_1\};F)$
is given by the matrices
\[ \left( \begin{array}{ccc}
           1 & 0 & 0 \\
           0 & 1 & 1 \\
           0 & 0 & 1 \end{array} \right),   \qquad \qquad \left( \begin{array}{ccc}
           1 & 0 & -1 \\
           0 & 0 & 0 \\
           0 & 1 & 1 \end{array} \right). \]
Since $T-\{1\}$ is a classifying space for $F_2$, we can use Lemma \ref{CohomologyOfFreeGroup} to obtain
\[ H^1(T-\{1\};\underline{H^1(T-\{1,x_1\};F)}) \cong \frac{F\{e_h,e_v,e_r\} \oplus F\{e'_h,e'_v,e'_r\}}{F\{(he_j-e_j,ve'_j-e'_j) \mid j=h,v,r\}}. \]
Note that the quotient identifies $e_v$ with $e_h'$, hence this quotient is generated by
the classes of $e_h$, $e_v$, $e_r$, $e_v'$ and $e_r'$ as an $F$--module. The action of $\Z/2$ sends
\[ e_h \mapsto e_v', \qquad e_v \mapsto e_h' = e_v, \qquad e_r \mapsto -e_r', \]
hence $H^2(\Conf_2(T-\{1\});F) \cong 3 \oplus 2\sigma $ as an $F\Z/2$--module.
Since $H^1(T;F)$ equals $1 \oplus \sigma$ and $H^2(T;F)=\sigma$, the statement about 
the cohomology of $\Conf_3(T)$ follows mechanically from K\"unneth theorem.
\end{proof}

\begin{remark}
The computation of $H^1(T-\{1\};\underline{H^1(T-\{1,y\};F)})$ as an $F\Z/2$--module was performed
in \cite{Ji} using the Mayer-Vietoris sequence.
\end{remark}

\begin{proposition}
If $F$ is a field of characteristic different from two, then
\[ H^n(\Conf_3^{\ab}(U_2);F) \cong \left\{ \begin{array}{ll}
                                           F & \text{ if } n=0, \\
										   F^3 & \text{ if } n=1, 6, \\
										   F^7 & \text{ if } n=2, 5, \\
										   F^{10} & \text{ if } n=3, \\
										   F^9 & \text{ if } n=4, \\
										   0 & \text{ otherwise.} \end{array} \right. \]
\end{proposition}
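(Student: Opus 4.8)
The plan is to reduce the computation to Proposition \ref{CohomologyFormula} and Lemma \ref{CohomologyOfConfiguration}. First I would apply Proposition \ref{CohomologyFormula} to $G = U_2$, whose Weyl group is $\Z/2$: since the characteristic of $F$ is different from two, it gives a graded isomorphism $H^*(\Conf_3^{\ab}(U_2);F) \cong H^*(U_2/T \times \Conf_3(T);F)^{\Z/2}$, where $T = S^1 \times S^1$ and $\Z/2$ acts by permuting the coordinates of $\Conf_3(T)$ and trivially on $U_2/T$.

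Next I would record, from the $U_2$ columns of Table \ref{table1}, that $H^*(U_2/T;F)$ is the trivial $F\Z/2$--module $1$ in degree $0$ together with the sign representation $\sigma$ in degree $2$, and vanishes otherwise. Applying the K\"unneth theorem $\Z/2$--equivariantly, $H^n(U_2/T \times \Conf_3(T);F)$ decomposes as $H^0(U_2/T;F) \otimes_F H^n(\Conf_3(T);F)$ coming from $p=0$ plus $H^2(U_2/T;F) \otimes_F H^{n-2}(\Conf_3(T);F)$ coming from $p=2$, and nothing else. Because the order of $\Z/2$ is invertible in $F$, the functor $(-)^{\Z/2}$ is exact and additive, and for $F\Z/2$--modules $V$, $W$ one has $\dim_F (V \otimes_F W)^{\Z/2} = \mult_1 V \cdot \mult_1 W + \mult_{\sigma} V \cdot \mult_{\sigma} W$, both irreducibles being self-dual. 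Feeding in the multiplicities of $H^*(U_2/T;F)$, this collapses to
\[ \dim_F H^n(\Conf_3^{\ab}(U_2);F) = \mult_1 H^n(\Conf_3(T);F) + \mult_{\sigma} H^{n-2}(\Conf_3(T);F). \]
Finally I would substitute the $F\Z/2$--module structure of $H^*(\Conf_3(T);F)$ from Lemma \ref{CohomologyOfConfiguration} and read off the stated dimensions: for instance $n=3$ gives $7 + 3 = 10$, $n=4$ gives $2 + 7 = 9$, $n=5$ gives $0 + 7 = 7$, and everything vanishes for $n \geq 7$.

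I do not expect a genuine obstacle at this stage, since the substantive work has already been done in Lemma \ref{CohomologyOfConfiguration} (via the presentation of $\pi_1(\Conf_3(T))$, the Fadell--Neuwirth fibration, and Lemma \ref{CohomologyOfFreeGroup}) and in identifying $H^*(U_2/T;F)$ as an $F\Z/2$--module. The only steps needing a moment's care are checking that the K\"unneth isomorphism respects the $\Z/2$--action for this diagonal action, and that passing to invariants over a field in which $2$ is a unit loses no information; after that the argument is pure bookkeeping with the two tables of multiplicities.
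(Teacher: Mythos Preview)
Your approach is essentially the same as the paper's, and the final computation is correct. There is, however, one slip worth flagging: you write that $\Z/2$ acts ``trivially on $U_2/T$'', but this is false. The Weyl group acts on $G/T$ by right translation, and for $U_2$ this corresponds to the antipodal action on $U_2/T \cong S^2$, as the paper notes. You immediately contradict yourself by (correctly) reading off $H^2(U_2/T;F) = \sigma$ from Table~\ref{table1}; were the action trivial, this would be the trivial module $1$. Since you use the correct $F\Z/2$--module structure in the actual K\"unneth computation, the numbers come out right---just fix the verbal description of the action. (You may be conflating the Weyl group action of Proposition~\ref{CohomologyFormula} with the $\Sigma_k$--action of Theorem~\ref{StabilityWithRespectToTuple}, which \emph{is} trivial on $G/T$; here it is the former that is relevant.)
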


\begin{proof}
Since $H^*(\Conf_3^{\ab}(U_2);F) \cong H^*(U_2/T \times \Conf_3(T);F)^{\Z/2}$ and the action of $\Z/2$ on $U_2/T \cong S^2$ 
corresponds to the antipodal action, just as in Proposition \ref{CohomologyGroups} we have
\[ \dim_F H^n(\Conf_3^{\ab}(U_2);F) = \mult_1 H_n(\Conf_3(T);F) + \mult_{\sigma} H_{n-2}(\Conf_3(T);F). \]
The result follows from Lemma \ref{CohomologyOfConfiguration}.
\end{proof}

\subsection*{Acknowledgments} 
The authors thank Omar Antol\'in for relevant comments which helped
achieve some of the computations.

\subsection*{Funding}
Both authors were partially supported by SEP-CONACYT Basic Science Project 242186: Homotopical aspects of compact Lie groups,
granted by the Mexican institutions Secretar\'ia de Educaci\'on P\'ublica and Consejo Nacional de Ciencia y Tecnolog\'ia.

\bibliographystyle{amsplain}

\bibliography{mybibfile}

\end{document}